\documentclass{amsart}
\usepackage{amsmath, amsthm, amssymb,hyperref, graphicx, tikz, cancel}
\usepackage{mathtools}
\usepackage[margin=1in]{geometry}
 \usetikzlibrary{shapes.misc}
\usepackage{algorithm}
\usepackage{algpseudocode}
\usepackage[dvipsnames]{xcolor}

\usetikzlibrary{shapes.geometric}
\usetikzlibrary{shapes.symbols}
\usepackage{fontawesome5}

\newcommand{\SecondEulerian}[2]{\left\langle\!\!\! 
     \genfrac{\langle}{\rangle}{0pt}{}{#1}{#2} \!\!\!\right\rangle}

 \usepackage{pgfplots}
\usepackage{ytableau}
\usetikzlibrary{calc}

\definecolor{LMUCrimson}{RGB}{171, 12, 47}    
\definecolor{LMUBlue}{RGB}{16, 127, 184}    

\tikzset{
  pics/car/.style={
    code={
      \begin{scope}[scale=0.3]
        \draw[fill=LMUBlue!30, thick]
          (0,0) rectangle (4,1.5);

        \draw[fill=LMUBlue!30, thick]
          (1,1.5) -- (1.5,2.3) -- (2.8,2.3) -- (3.3,1.5) -- cycle;

        \fill (1,0) circle (0.35);
        \fill (3,0) circle (0.35);

        \node at (2,0.75) {#1};
      \end{scope}
    }
  }
}

\pgfplotsset{compat=1.17}
 \newtheorem{theorem}{Theorem}[section]
\newtheorem{lemma}[theorem]{Lemma}
\newtheorem{proposition}[theorem]{Proposition}
\newtheorem{corollary}[theorem]{Corollary}

\newtheorem{question}[theorem]{Question}

\theoremstyle{definition}
\newtheorem{definition}[theorem]{Definition}
\newtheorem{example}[theorem]{Example}

\theoremstyle{remark}
\newtheorem{remark}[theorem]{Remark}

\newcommand{\thefontsize}{The current size is: \f@size pt}

\title{Parking with Frustrated Drivers}

\author{Joshua Hallam}
\address{Department of Mathematics, Loyola Marymount University, Los Angeles, CA 90045 USA}
 \email{joshua.hallam@lmu.edu}
 
 \author{Jenson Molebash}
\address{Minneapolis, MN}
 \email{jensonmolebash@gmail.com}

\author{Chris Porter}
\address{Department of Mathematics, University of California Davis, Davis, CA 95616 USA}
 \email{chaporter@ucdavis.edu}
 
\subjclass[2020]{Primary: 05A05, Secondary: 05A15}

\keywords{parking functions, double factorial, Dyck paths, second order Eulerian number}

\begin{document}

 \begin{abstract}
    Imagine there are $n$ cars lined up along a one-way street containing $n$ spots. Each car contains a group of friends, including a reluctant driver.  Each car has a preferred spot and cars enter one by one. The cars drive to their preferred spot and if it is empty park there. If it is not empty, a friend in the back yells out ``Hey! You should have driven faster!". Frustrated by this, the driver continues down the road until they find the last unoccupied spot (if one exists) and parks there.  We say a sequence $(a_1,a_2,\dots, a_n)$ of preferred spots is a \textit{frustrated parking function} if all cars can park under this rule.

    In this paper, we study the enumerative properties of frustrated parking functions.
    In particular, we show that the number of frustrated parking functions of length $n$ is $(2n-1)!!$.   This is done  by associating frustrated parking functions with height labeled Dyck paths. Using this association, we are then able to better understand the sets of lucky cars and lucky spots for frustrated parking functions. We show  that the frustrated parking functions of length $n$ where the first $k$ cars (or first $k$ spots) are lucky is given by $k!S(n,k)$ where $S(n,k)$ is the Stirling number of the second kind. This in turn implies that the number of frustrated parking functions where  once a car (or spot) is unlucky, the remaining cars (or spots) are unlucky is counted by the $n^{th}$ Fubini number. We also show that the number of frustrated parking functions with $k$ lucky cars (or spots) is given by $\displaystyle\SecondEulerian{n}{k}$, the second order Eulerian number.
\end{abstract}

\maketitle

\section{Background}
Imagine you have  a one-way street with parking spots labeled by $[n]:=\{1,2,\dots, n\}$.   There are $n$ cars lined up to park and each has a preferred spot. One by one, the cars enter the lot and drive to their preferred spot. If it is empty, the car parks there. If not, the car drives to the next unoccupied spot (if it exists) and parks there. We say a sequence of preferences $(a_1,a_2,\dots,a_n)$ is a \textit{parking function} if cars  $1,2,\dots,n$ can all successfully park when car $i$ prefers parking spot $a_i$.  For example, the sequence $(1,1,3)$ is a parking function. On the other hand, $(1,4,3,3)$ is not a parking function since  there are three cars that prefer the spots $3$ or $4$, but only two such spots for them to park in.\\

Parking functions were originally explicitly defined by  Konheim and Weiss~\cite{konheim1966occupancy} who studied them in relation to hashing using linear probing. They showed that the number of parking functions of length $n$ is $(n+1)^{n-1}$.  Clearly, in a parking function, the number of cars that prefer spot $i$ or greater cannot exceed the number of such spots. That is, the number of cars with preference $i$ or larger cannot be more than $n-i+1$. Or, equivalently, the number of cars that prefer a spot no larger than $i$ is at least $i$.  What is less obvious is that this is a characterization of parking functions as was shown by Konheim and Weiss.\\

Since the introduction of parking functions, many connections have been made between parking functions and enumerative and algebraic combinatorics.  See Yan's survey~\cite{yan} for a detailed overview of the role parking functions play in combinatorics.  We also recommend Mart\'inez Mori's expository article~\cite{mori} for a brief introduction to the subject.\\

Over the past several years, there has been interest in understanding modifications to the parking rule. Such examples include allowing cars to back up from their preferred spot~\cite{Baumgardner2019,GenOfParkingFuncAllowBack}, only allowing cars to park within one spot of their preferred spot~\cite{Hadaway2021,unitIntRFubini,unitIntPFs}, only allowing cars to park in spots that form an interval~\cite{intervalPFs}, and allowing cars to have different sizes~\cite{permutationInvariantParkingFunctions}.   The  reader is encouraged to see~\cite{chooseOwnAdv} for a ``choose your own adventure" approach to learning about the many generalizations of parking functions.\\

Our work here focuses on considering a new modification to the parking procedure. In our modification, we still assume there are $n$ cars and $n$ spots and each enters the lot one at a time. Moreover, if the preferred spot is empty the car parks there. However, if the preferred spot is not available, the car drives to the \textit{furthest} available spot (as opposed to the nearest available spot).  As we mentioned in the abstract, we think of these drivers as being frustrated with their passengers and as  a result want to park as far away as possible from the preferred spot (while still respecting that  the road is one-way). We have thus named our sequences \textit{frustrated parking functions}. At times we will  use the terminology ``classical parking function" to differentiate them from frustrated parking functions.  \\

As  we will see in Proposition~\ref{prop:fpfsArePFs}, frustrated parking functions are indeed parking functions. However,  not all parking functions are frustrated parking functions.  For example, $(1,1,3)$ is a parking function, but not a frustrated parking function. To see why, lets consider how the cars park. First, we start with cars lined up at an empty lot as shown below.
\begin{center}
    
  \begin{tikzpicture}[scale=1]

\def\spotwidth{1.4}
\def\spotheight{1.2}

\path (3*\spotwidth,0) rectangle ({4*\spotwidth},\spotheight);

\draw[thick] (0,0) rectangle ({3*\spotwidth},\spotheight);

\foreach \i in {1,...,3} {
    \draw[thick] ({\i*\spotwidth},0) -- ({\i*\spotwidth},\spotheight);
}

\foreach \i in {1,...,3} {
    \node at ({(\i-0.5)*\spotwidth},-0.4) {\small \i};
}

\pic at (-2,0) {car={$C_1$}};
\pic at (-4,0) {car={$C_2$}};
\pic at (-6,0) {car={$C_3$}};

\node at ({(1-0.5)*\spotwidth}, {0.5*\spotheight}) {$\textcolor{PineGreen}{}$ };

\node at ({(2-0.5)*\spotwidth}, {0.5*\spotheight}) {$\textcolor{BrickRed}{}$ };

\node at ({(3-0.5)*\spotwidth}, {0.5*\spotheight}) {$\textcolor{PineGreen}{}$ };

\end{tikzpicture}
\end{center}

Then car one enters. It prefers spot 1 and parks there.

\begin{center}
    
  \begin{tikzpicture}[scale=1]

\def\spotwidth{1.4}
\def\spotheight{1.2}

\path (3*\spotwidth,0) rectangle ({4*\spotwidth},\spotheight);

\draw[thick] (0,0) rectangle ({3*\spotwidth},\spotheight);

\foreach \i in {1,...,3} {
    \draw[thick] ({\i*\spotwidth},0) -- ({\i*\spotwidth},\spotheight);
}

\foreach \i in {1,...,3} {
    \node at ({(\i-0.5)*\spotwidth},-0.4) {\small \i};
}

\pic[rotate=30] at (0.25,0) {car={$C_1$}};
\pic at (-2,0) {car={$C_2$}};
\pic at (-4,0) {car={$C_3$}};

\end{tikzpicture}
\end{center}

Then car two enters. It prefers spot 1, but finds it full. It goes to spot 3, and parks there.

\begin{center}
    
  \begin{tikzpicture}[scale=1]

\def\spotwidth{1.4}
\def\spotheight{1.2}

\path (3*\spotwidth,0) rectangle ({4*\spotwidth},\spotheight);

\draw[thick] (0,0) rectangle ({3*\spotwidth},\spotheight);

\foreach \i in {1,...,3} {
    \draw[thick] ({\i*\spotwidth},0) -- ({\i*\spotwidth},\spotheight);
}

\foreach \i in {1,...,3} {
    \node at ({(\i-0.5)*\spotwidth},-0.4) {\small \i};
}

\pic[rotate=30] at (0.25,0) {car={$C_1$}};
\pic[rotate=30] at (3.06,0) {car={$C_2$}};
\pic at (-2,0) {car={$C_3$}};

\end{tikzpicture}
\end{center}

Now car three enters and wants to park in spot 3. However, it is full and therefore cannot park and so drives away.

\begin{center}
    
  \begin{tikzpicture}[scale=1]

\def\spotwidth{1.4}
\def\spotheight{1.2}

\path (3*\spotwidth,0) rectangle ({4*\spotwidth},\spotheight);

\draw[thick] (0,0) rectangle ({3*\spotwidth},\spotheight);

\foreach \i in {1,...,3} {
    \draw[thick] ({\i*\spotwidth},0) -- ({\i*\spotwidth},\spotheight);
}

\foreach \i in {1,...,3} {
    \node at ({(\i-0.5)*\spotwidth},-0.4) {\small \i};
}

\pic[rotate=30] at (0.25,0) {car={$C_1$}};
\pic[rotate=30] at (3.06,0) {car={$C_2$}};
\pic at (5,0) {car={$C_3$}};
\end{tikzpicture}
\end{center}

 From this we see that $(1,1,3)$ is not a frustrated parking function. \\\

Now let's see an example  a frustrated parking function. Lets consider  the sequence (1,1,2). 
First, we start with cars lined up at an empty lot as shown below.

\begin{center}
  \begin{tikzpicture}[scale=1]
\def\spotwidth{1.4}
\def\spotheight{1.2}
\path (3*\spotwidth,0) rectangle ({4*\spotwidth},\spotheight);
\draw[thick] (0,0) rectangle ({3*\spotwidth},\spotheight);
\foreach \i in {1,...,3} {
    \draw[thick] ({\i*\spotwidth},0) -- ({\i*\spotwidth},\spotheight);
}
\foreach \i in {1,...,3} {
    \node at ({(\i-0.5)*\spotwidth},-0.4) {\small \i};
} 
\pic at (-2,0) {car={$C_1$}};
\pic at (-4,0) {car={$C_2$}};
\pic at (-6,0) {car={$C_3$}};
\node at ({(1-0.5)*\spotwidth}, {0.5*\spotheight}) {$\textcolor{PineGreen}{}$ };
\node at ({(2-0.5)*\spotwidth}, {0.5*\spotheight}) {$\textcolor{BrickRed}{}$ };
\node at ({(3-0.5)*\spotwidth}, {0.5*\spotheight}) {$\textcolor{PineGreen}{}$ };
\end{tikzpicture}
\end{center}

Then car one enters. It prefers spot 1 and parks there.

\begin{center}
    
  \begin{tikzpicture}[scale=1]

\def\spotwidth{1.4}
\def\spotheight{1.2}

\path (3*\spotwidth,0) rectangle ({4*\spotwidth},\spotheight);

\draw[thick] (0,0) rectangle ({3*\spotwidth},\spotheight);

\foreach \i in {1,...,3} {
    \draw[thick] ({\i*\spotwidth},0) -- ({\i*\spotwidth},\spotheight);
}

\foreach \i in {1,...,3} {
    \node at ({(\i-0.5)*\spotwidth},-0.4) {\small \i};
}

\pic[rotate=30] at (0.25,0) {car={$C_1$}};
\pic at (-2,0) {car={$C_2$}};
\pic at (-4,0) {car={$C_3$}};

\end{tikzpicture}
\end{center}

Then car two enters. It prefers spot 1 which is full and so parks at the furthermost spot, namely spot 3.  

\begin{center}
    
  \begin{tikzpicture}[scale=1]

\def\spotwidth{1.4}
\def\spotheight{1.2}

\path (3*\spotwidth,0) rectangle ({4*\spotwidth},\spotheight);

\draw[thick] (0,0) rectangle ({3*\spotwidth},\spotheight);

\foreach \i in {1,...,3} {
    \draw[thick] ({\i*\spotwidth},0) -- ({\i*\spotwidth},\spotheight);
}

\foreach \i in {1,...,3} {
    \node at ({(\i-0.5)*\spotwidth},-0.4) {\small \i};
}

\pic[rotate=30] at (0.25,0) {car={$C_1$}};
\pic[rotate=30] at (3.06,0) {car={$C_2$}};
\pic at (-2,0) {car={$C_3$}};

\end{tikzpicture}
\end{center}

Now car three enters. It wants to park in spot 2. Since it is empty, it parks there. 

\begin{center}
    
  \begin{tikzpicture}[scale=1]

\def\spotwidth{1.4}
\def\spotheight{1.2}

\path (3*\spotwidth,0) rectangle ({4*\spotwidth},\spotheight);

\draw[thick] (0,0) rectangle ({3*\spotwidth},\spotheight);

\foreach \i in {1,...,3} {
    \draw[thick] ({\i*\spotwidth},0) -- ({\i*\spotwidth},\spotheight);
}

\foreach \i in {1,...,3} {
    \node at ({(\i-0.5)*\spotwidth},-0.4) {\small \i};
}

\pic[rotate=30] at (0.25,0) {car={$C_1$}};
\pic[rotate=30] at (3.06,0) {car={$C_2$}};
\pic[rotate=30] at (1.655,0) {car={$C_3$}};
\end{tikzpicture}
\end{center}

Let us also note that  even if a parking function is a frustrated parking function, that  does not mean that the end result of parking is the same. For example, consider $(1,1,2)$ again. As a classical parking function, the order of the spots that the cars park in is $1,2,3$. But as a frustrated parking function, the order is $1,3,2$.\\

The remainder of this paper is organized as follows. In the next section, we start by proving some basic facts about frustrated parking functions. We then count weakly increasing and weakly decreasing frustrated parking functions. Then in Section~\ref{sec:countingAll} we provide a map from frustrated parking functions to Dyck paths. Using this map, we show that the total number of frustrated parking functions of length $n$ is the same as the number of height labeled Dyck paths of semilength $n$ . This implies that there are  $(2n-1)!!$. Next in Section~\ref{sec:countLucky}, we use the map from frustrated parking functions to Dyck paths to understand lucky cars/spots. Briefly, a car is \textit{lucky} if it parks in its preferred spot and a spot is \textit{lucky} if the car that parked there is lucky. We show that, for a fixed set $T$, the number of frustrated parking functions of length $n$ whose set of lucky car positions is $T$ equals the  number of frustrated parking functions of length $n$ whose set of lucky spots is $T$. Note that this is something special about frustrated parking functions and the analogue for classical parking functions is not true. We then count frustrated parking functions where the first $k$ cars/spots are lucky and find a connection with unit interval parking functions.  In Section~\ref{sec:graphGeneralization},  we discuss how to generalize these ideas to a game on graphs. We finish by providing several open questions.\\

\begin{table}[H]
    \centering
   
\begin{tabular}{|c|c
|c|}\hline
   \textbf{  Family of Frustrated Parking Functions}& \textbf{Count/OEIS} & \textbf{Reference} \\
     \hline\hline
    All  & $(2n-1)!!$ \href{https://oeis.org/A001147}{(A001147)}& Theorem~\ref{thm:numTotalFPFsIsDoubleFact} \\
    \hline
    Weakly increasing  & $2^{n-1} $  \href{https://oeis.org/A000079}{(A000079)}& Corollary~\ref{cor:numInc}\\
    \hline
     Weakly decreasing  & $\displaystyle\dfrac{1}{n+1}{2n\choose n}$ \href{https://oeis.org/A000108}{(A000108)}  & Proposition~\ref{prop:weaklyDecPFAreFPFs}\\
     \hline
     $k$ lucky cars/spots & $\displaystyle\SecondEulerian{n}{k}$ \href{https://oeis.org/A008517}{(A008517)} & Corollary~\ref{cor:numWithKDescentsIsSecondEuler}\\
     \hline
     First $k$ cars/spots are lucky &  $k!S(n,k)$\href{https://oeis.org/A019538}{(A019538)} &  Proposition~\ref{prop:numFirstSpotsLuckyisk!S(n,k)}\\
     \hline
     Once a car/spot unlucky, remaining cars/spots  unlucky & $n^{th}$ Fubini number \href{https://oeis.org/A000670}{(A000670)} & Corollary~\ref{cor:numLuckyThenUnluckyIsFubini}\\
     \hline
\end{tabular}
\vspace{5 pt}

    \caption{Counts of Families of Frustrated Parking Functions of Length $n$}
    \label{tab:enumerativeResultTable}
\end{table}

The results in this paper are enumerative in nature. As such, in Table~\ref{tab:enumerativeResultTable}, we have provided a list of subsets of frustrated parking functions, their counts, and the reference in the paper for the result. Note that we use $\displaystyle\SecondEulerian{n}{k}$ for the second order Eulerian number indexed by $n$ and $k$ as well as $S(n,k)$ for the Stirling number of the second kind indexed by $n$ and $k$.

\section{Enumerative Results}
Before we prove our enumerative results, we need to prove some basic facts about frustrated parking functions.
\subsection{Basic Properties of Frustrated Parking Functions}
\begin{proposition}\label{prop:fpfsArePFs}
    If $(a_1,a_2,\dots, a_n)$ is a frustrated parking function, then  $(a_1,a_2,\dots, a_n)$ is also a parking function.
\end{proposition}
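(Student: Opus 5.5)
We will prove the statement using the Konheim--Weiss characterization recalled in the Background: a sequence $(a_1,\dots,a_n)$ with entries in $[n]$ is a parking function if and only if, for every $i\in[n]$, the number of cars with preference at least $i$ is at most $n-i+1$. So it suffices to show that every frustrated parking function satisfies this inequality. Equivalently, if it fails for some $i$, then the frustrated parking process must fail.

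The key observation is that under the frustrated rule, as under the classical rule, a car never parks in a spot before its preferred spot. A car either parks in its preferred spot $a_j$, or it drives past it to the last unoccupied spot, which exists only if that spot has index greater than $a_j$. In either case, if car $j$ parks at all, it parks in a spot with index at least $a_j$.

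Now fix $i$ and let $C_i=\{j : a_j\ge i\}$. Suppose $(a_1,\dots,a_n)$ is a frustrated parking function, so every car parks. By the observation, each car in $C_i$ parks in one of the spots $i,i+1,\dots,n$. Distinct cars occupy distinct spots, so $|C_i|\le n-i+1$. Since $i$ was arbitrary, the characterization shows that $(a_1,\dots,a_n)$ is a parking function.

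There is no real obstacle here. The only point needing care is the observation that a frustrated car moves only forward, and this is immediate from the rule that the driver continues down the one-way road from the preferred spot. The argument itself needs only the easy direction of the characterization (preferences never exceed parking spots); the converse direction of Konheim--Weiss is what turns the counting inequality back into the statement that the sequence is a parking function.
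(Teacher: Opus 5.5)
Your proof is correct and is essentially the paper's argument. The paper argues by contrapositive: it takes an $i$ for which more than $n+1-i$ cars prefer spot $i$ or later, and notes that these cars must all park in spots $i,\dots,n$ under the frustrated rule, which is impossible. That is your forward-only observation combined with the Konheim--Weiss characterization, stated in the direct rather than the contrapositive form.
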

\begin{proof}
    We prove the contrapositive.  Suppose that  $(a_1,a_2,\dots, a_n)$  is not a parking function. Then there must be some $i$ such that the number of cars that prefer spot $i$ or greater is more than $n+1-i$.  But then when the cars park with respect to the frustrated parking rule, these   cars must still park in spots $i,i+1,\dots, n$. This cannot happen as there are more cars than spots. Hence  $(a_1,a_2,\dots, a_n)$ is not a frustrated parking function.
\end{proof}

As we have already seen with the case $(1,1,3)$, the converse of this statement is false.   It is not hard to see that both $(1,3,1)$ and $(3,1,1)$ are frustrated parking functions.  It follows that, unlike  parking functions, a rearrangement of  a frustrated parking function is not necessarily a frustrated parking function.  Nonetheless, we do have the following.

\begin{proposition}\label{prop:SwapProp}
    Let $(a_1,a_2,\dots, a_{i-1},a_i,a_{i+1},\dots, a_n)$ be a frustrated parking function. If $a_i<a_{i+1}$, then $(a_1,a_2,\dots, a_{i-1}, a_{i+1},a_i,\dots, a_n)$ is a frustrated parking function.
\end{proposition}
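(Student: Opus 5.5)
The plan is to compare the two parking processes directly, car by car. Cars $1,\dots,i-1$ have the same preferences in both sequences, so they park in exactly the same spots. Let $S$ be the set of spots they occupy, and write $a=a_i$, $b=a_{i+1}$ with $a<b$.

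The frustrated rule depends only on the current set of occupied spots and the preference $p$ of the arriving car:
\begin{itemize}
\item if $p\notin S$, the car parks at $p$;
\item otherwise, it parks at $\max([n]\setminus S)$, provided this maximum exceeds $p$, and it fails if not.
\end{itemize}
The "provided it exceeds $p$" clause is forced because the street is one-way: the last empty spot overall is the last empty spot beyond $p$ whenever one exists. Hence it suffices to prove one claim. If the original cars $i,i+1$ both park successfully from state $S$, then in the swapped order (first preference $b$, then $a$) both also park successfully, and the two of them occupy the same two-element set of spots. Cars $i+2,\dots,n$ then face identical states, and they park because the original sequence is a frustrated parking function.

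Let $m=\max([n]\setminus S)$, and let $m'$ be the largest empty spot once one more spot has been filled. I will split into four cases according to whether $a\in S$ and whether $b\in S$.
\begin{enumerate}
\item $a,b\notin S$. Both orders fill $\{a,b\}$, since $a\neq b$.
\item $a\notin S$, $b\in S$. In the original order, car $i$ takes $a$. Car $i+1$ must then find an empty spot beyond $b$; that spot is larger than $b>a$, so $a\neq m$. Hence car $i+1$ takes $m$, and $m>b$. In the swapped order, $b$ is occupied, so that car takes $m$; the condition $m>b$ holds, as just shown. Then $a$ is still empty and gets filled. Both orders fill $\{a,m\}$.
\item $a\in S$, $b\notin S$. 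In the original order, car $i$ takes $m$ (with $m>a$).
   \begin{itemize}
   \item If $b\neq m$, car $i+1$ takes $b$. In the swapped order, $b$ is filled first, and then $a$ goes to $m$, which is still the maximum empty spot and is larger than $a$. Both orders fill $\{m,b\}$.
   \item If $b=m$, car $i+1$ finds $b$ occupied and goes to $m'>b$. In the swapped order, $b=m$ is taken directly, and then $a$ goes to $m'$. This is legal because $m'>b>a$. Both orders fill $\{m,m'\}$.
   \end{itemize}
\item $a,b\in S$. Both cars go to "last empty" spots. In the original order, the success conditions are $m>a$ and $m'>b$. In the swapped order, we need $m>b$ and $m'>a$. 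These follow because $m>m'>b>a$. Both orders fill $\{m,m'\}$.
\end{enumerate}

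The main point of care is not the set bookkeeping, which is symmetric in $a$ and $b$. It is verifying that every "drive to the last spot" move in the swapped order is legal, meaning the target spot lies beyond the car's preference. This is exactly where the hypothesis $a<b$ is used: the swapped order moves the larger preference earlier, so the feasibility inequalities inherited from the original run ($m>b$ or $m'>b$) dominate the ones required for the car preferring $a$. I expect case 2 and the subcase $b=m$ of case 3 to need the most attention when writing out the details.
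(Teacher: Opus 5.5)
Your proof is correct and is essentially the paper's argument. Both show that cars $i$ and $i+1$ occupy the same two spots in either order; the paper splits cases by whether the original spots satisfy $c<d$ or $d<c$ (which forces car $i$ to be lucky, resp.\ unlucky, with a further split on car $i+1$), while you split directly on whether $a,b\in S$. One small remark: the subcase $b=m$ of your Case 3 cannot occur, since $m'<m=b$ contradicts the condition $m'>b$ needed for car $i+1$ to park in the original order, so your argument there holds only vacuously and that subcase needs no special attention.
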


\begin{proof}
    Let $P=(a_1,a_2,\dots, a_{i-1},a_i,a_{i+1},\dots, a_n)$ and let $P'=(a_1,a_2,\dots, a_{i-1}, a_{i+1},a_i,\dots, a_n)$.  With respect to $P$ and $P'$,  the first $i-1$ cars park exactly the same and hence in $P'$ at least the first $i-1$ cars can park.  Suppose that for $P$, car $i$ parks in spot $c$ and car $i+1$ parks in spot $d$. We will show that the spots that are filled by the time the first $i+1$ cars park with respect to $P$ and $P'$ are the same. This will then imply $P'$ is a frustrated parking function. We break into cases depending on whether $c<d$ or $c>d$. \\

    \underline{Case 1}:  Suppose that $c<d$. Since car $i$ parks in spot $c$ with respect to $P$, we know that $a_i\leq c$. In fact, we claim that $a_i=c$. If not, then with respect to $P$, car $i$ does not park in its preferred spot. But then it would have parked in spot $d$ since $c<d$ and $d$ is open. So $a_i=c$. Since $a_i<a_{i+1}$, we have that $a_{i+1}>c$. It then follows that with respect to $P'$, when car $i$ enters and prefers spot $a_{i+1}$, it will consider the same spots as was the case in $P$. So it will park in spot $d$ with respect to $P'$. Then car $i+1$ comes in and prefers spot $a_i=c$. It is open so it parks there.\\

    \underline{Case 2}: Now let us assume that $d<c$. Note that since $a_{i+1}$ 
    parked in spot $d$, $a_{i+1}\leq d$.  Since $a_i<a_{i+1}$, we have that $a_i<d<c$.  Note this means that car $i$ did not park in its preferred spot with respect to  $P$.\\

 Suppose that $a_{i+1}=d$. Then in $P'$ car $i$ parks in spot $d$. Then car $i+1$ comes in preferring a spot before $d$. It finds it full as it did with respect to $P$ and then parks in spot $c$. \\

Now suppose that $a_{i+1}\neq d$. Then $a_{i+1}<d$. Since $c>d$, we know that $a_{i+1}\neq c$. Hence car $i$'s preferred spot must be filled with respect to $P'$  when car $i$ enters. It follows that it looks for the largest empty spot. This must have been spot $c$ since if there was a larger one, car $i$ in $P$ would have parked there. So car $i$ parks in spot $c$ with respect to $P'$. Now car $i+1$ comes in and prefers a spot before $d$. Its preferred spot must be filled (since in $P$ it  parked in spot $c$ and $a_i<c$). The largest empty spot is now $d$ since otherwise car $i+1$ in $P$ would have parked at a larger spot. So, car $i+1$ parks in spot $d$ with respect to $P'$.
 \end{proof}

\subsection{Counting Weakly Increasing and Weakly Decreasing Frustrated Parking Functions}

\begin{proposition}\label{prop:increasingSeqCharacterization}
    Suppose that $(a_1,a_2,\dots, a_n)$ is a weakly increasing sequence. Then $(a_1,a_2,\dots, a_n)$  is a frustrated parking function if and only if $\{a_1,a_2,\dots, a_n\}=[k]$ for some $k$.
\end{proposition}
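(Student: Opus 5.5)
We prove the two directions separately. Both rest on one observation about weakly increasing sequences: when car $j$ enters, cars $1,\dots,j-1$ all preferred spots at most $a_j$.

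\textbf{Forward direction.} Suppose $(a_1,\dots,a_n)$ is a weakly increasing frustrated parking function. By Proposition~\ref{prop:fpfsArePFs} it is a parking function, so at least one car prefers spot $1$; since the sequence is weakly increasing, $a_1=1$. It remains to show the set of values has no gaps. Suppose for contradiction that some value $v$ with $v\le a_n$ does not occur. Let $j$ be the first index with $a_j>v$. Then cars $1,\dots,j-1$ all prefer spots less than $v$.

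We claim that after car $1$ parks, every subsequent unlucky car parks in the largest empty spot, which is always beyond $a_n\ge a_j$. Formalize this as an invariant on the configuration after cars $1,\dots,m$ have parked, for $m<j$:
\begin{itemize}
\item every occupied spot below $v$ is occupied by a lucky car;
\item the unlucky cars occupy exactly the top block of spots $\{n-u+1,\dots,n\}$, where $u$ is the number of unlucky cars so far.
\end{itemize}
Cars $1,\dots,j-1$ all prefer spots below $v$, so each either parks lucky below $v$ or parks in the largest empty spot, extending the top block downward.

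Now consider the spots below $v$. There are $v-1$ of them, and they are filled only by lucky cars, i.e.\ one per distinct value. The first $j-1$ cars take only values in $[v-1]$. Hence the number of unlucky cars among them is at least $(j-1)-(v-1)=j-v$.

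To force a contradiction, look at the cars that prefer spots $\ge v$, namely cars $j,\dots,n$. That is $n-j+1$ cars. The available spots at least $v$ number $n-v+1$ in total, and at least $j-v$ of them are already taken by unlucky cars, each sitting at $n$, $n-1$, and so on. So exactly enough spots remain, but the top ones are gone. Concretely, the highest $u$ spots are filled; a car preferring $a_j$ where $a_j>n-u$ fails outright.

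Simplest route: apply the counting argument at threshold $a_n$. Any car preferring $a_n$ that finds it filled must go to a larger empty spot. I expect this counting to be the main obstacle, and I would try it as follows: among spots $\ge v$, the top block is filled by unlucky early cars while spot $v$ can never be filled. Spot $v$ can never receive a car, because a lucky car would need $a_i=v$, and an unlucky car always parks at the current maximum empty spot, which is $\ge a_n>v$ as long as spots above $v$ remain empty. So $v$ stays empty unless all spots above $v$ are full while some car still remains; since exactly $n$ cars fill $n$ spots, $v$ must be the last spot filled, by an unlucky car whose preferred spot is below $v$. But the last car is car $n$, with $a_n>v$, and a car preferring a spot above $v$ can never park at $v$. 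This is a contradiction.

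This gives a cleaner proof of the forward direction: if $v<a_n$ is missing, spot $v$ can only be taken by an unlucky car when no empty spot exceeds $v$. That car has preference below $v$, so it comes before car $j$. After it parks all spots $\ge v$ are full, yet cars $j,\dots,n$ still need spots $\ge v$. This is a contradiction.

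\textbf{Converse.} Suppose $\{a_1,\dots,a_n\}=[k]$. Cars with a new value $t$ arrive in order $t=1,2,\dots,k$. Induct on $t$ with the invariant that when the first car with value $t$ enters, spot $t$ is empty. Before that moment, lucky cars occupy $[t-1]$ and unlucky cars occupy a top block. The first car preferring $t$ is lucky, and every repeat car goes to the largest empty spot. Counting shows a spot always exists, since there are $n$ cars and $n$ spots and no spot is ever skipped over permanently. So all cars park.
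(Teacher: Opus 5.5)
Your final argument is correct. The forward direction is the paper's argument, and the converse uses the same counting organized differently.

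\textbf{Forward direction.} The paragraph beginning ``This gives a cleaner proof'' is exactly the paper's argument. If a value $v<a_n$ is missing, spot $v$ can only be filled by an unlucky car at a moment when every spot above $v$ is already full. That car prefers a spot below $v$, so by monotonicity it enters before car $j$, and car $j$ then cannot park. Your use of Proposition~\ref{prop:fpfsArePFs} to get $a_1=1$ is a fine substitute for the paper's remark that only a car preferring $1$ can occupy spot $1$.

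You should delete the exploratory text before that paragraph, because two of its steps are wrong as stated:
\begin{itemize}
\item The count ``exactly enough spots remain'' overlooks that cars $j,\dots,n$ cannot use spot $v$. Counting only the spots above $v$ leaves at most $n-j$ of them for $n-j+1$ cars, which would already be a contradiction.
\item The claim that $v$ ``must be the last spot filled'' is unjustified, since spots below $v$ may still be empty when $v$ is filled.
\end{itemize}

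\textbf{Converse.} The paper argues by contradiction. It takes a car $j$ that cannot park, splits into the cases $a_{j-1}=a_j$ and $a_{j-1}=a_j-1$, and shows that all $n$ spots would already be occupied by $j-1$ cars. You instead run the process forward with an invariant: the occupied spots are $[s]$, held by lucky cars with $s$ the largest value seen so far, together with a top block $\{n-u+1,\dots,n\}$ held by the $u$ unlucky cars so far. Both routes rest on the same count. Yours gives slightly more, since it shows directly that first occurrences are lucky and that unlucky cars fill spots $n,n-1,\dots$ in order.

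However, the line that does the real work is missing, and ``no spot is ever skipped over permanently'' does not replace it. When a car with value $t$ arrives, the number of cars already parked is $s+u\le n-1$.
\begin{itemize}
\item If it is the first car with value $t$, then $s=t-1$, so $u\le n-t$ and $n-u+1>t$. Hence spot $t$ lies strictly between $[t-1]$ and the top block, so it is empty.
\item If it is a repeat, then $s=t$, so $n-u\ge t+1$. Spot $n-u$ is then empty, it is the largest empty spot, and it lies beyond the occupied preferred spot $t$. Parking there preserves the invariant.
\end{itemize}
With that inequality written out, your proof is complete.
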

\begin{proof}
    $(\Rightarrow)$ We prove the contrapositive. Suppose that $\{a_1,a_2,\dots, a_n\}\neq [k]$ for all $k$.   If $1\notin \{a_1,a_2,\dots, a_n\}$, then no car will park in spot 1, so $(a_1,a_2,\dots, a_n)$ is not a frustrated parking function. Otherwise, it must be the case that there is some $i$ such that
    $$
    i,i+r\in\{a_1,a_2,\dots, a_n\}
    $$ 
    with $r>1$ and
     $$
    i+(r-1) \notin\{a_1,a_2,\dots, a_n\}
    $$
    We claim in this case that spot $i+(r-1)$ will not get filled.  Since $(a_1,a_2,\dots, a_n)$ is weakly increasing there exists a
    $2\leq j\leq n$ such that $a_{j-1}=i$ and    $a_{j}=i+r$.  Furthermore, the fact that $(a_1,a_2,\dots, a_n)$ is weakly increasing implies that for spot $i+(r-1)$ to be filled, it would have to be filled by the time the $(j-1)^{st}$ car parks. This in turn implies  that all the spots from $i+(r-1)$ to $n$ are filled by the time $(j-1)^{st}$ car parks. But, $a_j=i+r$ and so the $j^{th}$ car will not be able to park.\\

    $(\Leftarrow)$ Now suppose that $\{a_1,a_2,\dots, a_n\}=[k]$ for some $k$. We claim that $(a_1,a_2,\dots, a_n)$ is a frustrated parking function. Suppose this was not the case.  Then at some point, one of the cars cannot park. Suppose that car $j$ cannot park and that $a_j=i$. We break into two cases depending on if $a_{j-1}=i$ or $a_{j-1}=i-1$. If $a_{j-1}=i$, then spot $i$ must be taken by the time car $j$ parks.  Since car $j$ cannot park, this means that all spots from $i+1$ to $n$ must have been filled earlier.  Since $(a_1,a_2,\dots, a_n)$ is weakly increasing, these must have been cars that did not park in their preferred spots.  It follows that in the first $j-1$ terms of the sequence, the preferences $1,2,\dots i$ appeared along with $(n+1)-(i+1)$ repeated preferences.  This means that $j-1=i+(n+1)-(i+1)=n$.  And so $j=n+1$ which is impossible as the string is only of length $n$.\\

    On the other hand, if $a_{j-1}=i-1$ and the $j^{th}$ car can't park, then by the time the $j-1$ car parks, spots $i$ to $n$ are filled with cars that did not park in their preferred spots. Once again this implies that $j-1=(i-1)+(n+1)-i=n$ , which is impossible.
\end{proof}

As the previous proposition shows, weakly increasing frustrated parking functions are just weakly increasing strings starting at $1$ that have no gaps between preferences. Consider the map that takes in such a string $(a_1,a_2,\dots, a_n)$ and returns the composition $(c_1,c_2,\dots, c_m)$ where $c_i=|\{j\mid a_j=i\}|$.  For example, $(1,1,2,2,2,3,3)\mapsto(2,3,2)$.  It is not hard to see that this map is a bijection between weakly increasing frustrated parking functions of length $n$ and compositions of $n$. Since there are $2^{n-1}$ such compositions, we get the following.

\begin{corollary}\label{cor:numInc}
    The number of weakly increasing frustrated parking functions of length $n$ is $2^{n-1
    }$.
\end{corollary}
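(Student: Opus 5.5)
The plan is to obtain the count from Proposition~\ref{prop:increasingSeqCharacterization} and a bijection with compositions, as the discussion just before the statement suggests. By that proposition, the weakly increasing frustrated parking functions of length $n$ are exactly the weakly increasing sequences $(a_1,\dots,a_n)$ whose set of values is $[m]$ for some $m$. So it suffices to count such sequences.

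To each such sequence we associate the composition $(c_1,\dots,c_m)$ of $n$, where $c_i=|\{j\mid a_j=i\}|$. Each $c_i\geq 1$ because every value in $[m]$ occurs, and $\sum c_i=n$, so this is indeed a composition of $n$. Conversely, given a composition $(c_1,\dots,c_m)$ of $n$, we take the sequence consisting of $c_1$ ones, then $c_2$ twos, and so on up to $c_m$ copies of $m$. This sequence is weakly increasing, has value set $[m]$, and has length $n$. The two maps are mutually inverse, since a weakly increasing sequence is determined by the multiplicities of its values. Hence we have a bijection.

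It remains to recall that $n$ has $2^{n-1}$ compositions. We use the standard stars-and-bars argument: a composition of $n$ corresponds to a choice of a subset of the $n-1$ gaps between $n$ dots in a row at which to cut, which gives $2^{n-1}$.

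There is no real obstacle here. The only point requiring care is checking that every part $c_i$ is positive, and this is exactly the ``no gaps'' condition supplied by Proposition~\ref{prop:increasingSeqCharacterization}.
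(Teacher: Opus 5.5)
Your proposal is correct and follows the paper's argument exactly: Proposition~\ref{prop:increasingSeqCharacterization} reduces the count to weakly increasing sequences with value set $[m]$, and the multiplicity map $c_i=|\{j\mid a_j=i\}|$ gives a bijection with the $2^{n-1}$ compositions of $n$. Beyond the paper, you also write out the inverse map and the stars-and-bars count of compositions, both of which the paper leaves implicit.
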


Combining Proposition~\ref{prop:SwapProp}  and Proposition~\ref{prop:increasingSeqCharacterization}  immediately gives us the following.

\begin{corollary}\label{cor:ifLuckyCarIsIntervalItWorks}
   If $\{a_1,a_2,\dots, a_n\}=[k]$ for some $k$, then $(a_1,a_2,\dots, a_n)$  is a frustrated parking function.
\end{corollary}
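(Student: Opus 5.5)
The plan is to reach an arbitrary sequence with value set $[k]$ from its weakly increasing rearrangement by adjacent swaps that each move a smaller entry in front of a larger one, so that each swap is of the type covered by Proposition~\ref{prop:SwapProp}.

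Let $(a_1,a_2,\dots,a_n)$ have $\{a_1,\dots,a_n\}=[k]$, and let $(b_1,\dots,b_n)$ be its weakly increasing rearrangement. Since the set of values is still $[k]$, Proposition~\ref{prop:increasingSeqCharacterization} shows that $(b_1,\dots,b_n)$ is a frustrated parking function.

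Next I would show that $(a_1,\dots,a_n)$ can be obtained from $(b_1,\dots,b_n)$ by a finite sequence of adjacent transpositions. Each transposition will exchange positions $i,i+1$ whose current entries satisfy (entry at $i$) $<$ (entry at $i+1$). This is the reverse of bubble sort. Bubble-sorting $(a_1,\dots,a_n)$ into weakly increasing order uses only swaps of adjacent strict inversions $x>y$, and never swaps equal entries. Reading that sequence of swaps backwards starts at $(b_1,\dots,b_n)$ and, at each step, turns an adjacent pair $y<x$ into $x,y$. One point to check is that the bubble sort of $a$ ends exactly at $b$. This holds because sorting a multiset weakly increasingly has a unique result, and equal entries are indistinguishable as values.

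Finally, I would induct along the reversed swap sequence. Each intermediate sequence is a frustrated parking function by Proposition~\ref{prop:SwapProp}, applied with the hypothesis $a_i<a_{i+1}$ satisfied at every step. Hence $(a_1,\dots,a_n)$ is a frustrated parking function.

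The only real subtlety is making sure that every swap has a \emph{strict} inequality, since Proposition~\ref{prop:SwapProp} needs $a_i<a_{i+1}$. Choosing bubble sort, which only ever swaps strict inversions, handles this. Equivalently, one can induct on the number of inversions of $a$: if $a$ has any inversion, it has an adjacent one $a_j>a_{j+1}$. Swapping that pair gives a sequence $a'$ with one fewer inversion and the same value set $[k]$. By induction $a'$ is a frustrated parking function, and then Proposition~\ref{prop:SwapProp}, applied to $a'$ at positions $j,j+1$ where $a'_j<a'_{j+1}$, recovers $a$. The base case of zero inversions is the weakly increasing case.
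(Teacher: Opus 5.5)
Your proposal is correct and takes essentially the paper's approach: the paper obtains this corollary by combining Proposition~\ref{prop:increasingSeqCharacterization} (the weakly increasing rearrangement is a frustrated parking function) with repeated applications of Proposition~\ref{prop:SwapProp}. Your induction on inversions, swapping an adjacent strict descent so that the strict hypothesis $a_i<a_{i+1}$ holds at every step, spells out the details the paper treats as immediate.
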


As we previously saw with the parking function $(1,1,3)$, not every weakly increasing parking function is a frustrated parking function. However, it is the case that the weakly decreasing parking functions are frustrated  parking functions.  Intuitively this is because it is easier to park with the frustrated parking rules when the larger preferences occur early in the sequence.

\begin{proposition}\label{prop:weaklyDecPFAreFPFs}
    If $(a_1,a_2,\dots, a_n)$ is a weakly decreasing parking function, then it is a frustrated parking function. Consequently, there are $\displaystyle \dfrac{1}{n+1}{2n\choose n}$ weakly decreasing frustrated parking functions.
\end{proposition}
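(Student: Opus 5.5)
Let $(a_1,\dots,a_n)$ be a weakly decreasing parking function. The aim is an invariant on the set of occupied spots that guarantees every car can park. The parking function condition, in the form recalled in the Background, says that for each $i$ the number of cars with preference at least $i$ is at most $n+1-i$; equivalently, for the increasing rearrangement $b_1\le \dots\le b_n$ we have $b_j\le j$. Since our sequence is decreasing, $a_j=b_{n+1-j}\le n+1-j$. So car $j$ prefers a spot no larger than $n+1-j$, the number of spots still open when it arrives.

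\textbf{Invariant.} I will prove by induction on $j$ that after the first $j$ cars have parked, the occupied set is $\{a_j\}\cup\{n-m+1,\dots,n\}$ for some $m$, possibly with $a_j$ inside that final block. The cleaner formulation is: the occupied spots form a set $A_j\cup B_j$, where $A_j$ is the set of distinct values among $a_1,\dots,a_j$ and $B_j$ is a terminal block $\{n-|B_j|+1,\dots,n\}$ filled by unlucky cars. Every spot in $A_j$ is at least $a_j$, so every open spot below $a_j$ stays open.

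\textbf{Inductive step.} When car $j+1$ arrives with $a_{j+1}\le a_j$, there are two cases.
\begin{enumerate}
\item If $a_{j+1}<a_j$, or more generally $a_{j+1}$ is unoccupied, the car parks at its preference. The invariant is preserved, since $a_{j+1}$ is now the minimum of $A_{j+1}$.
\item If $a_{j+1}$ is occupied, then $a_{j+1}=a_j$, because every occupied spot below the terminal block lies in $A_j$, whose minimum is $a_j$. The car needs some open spot greater than $a_j$. It takes the largest such spot, which is the spot just below the terminal block $B_j$ after skipping any elements of $A_j$ sitting there. So $B_j$ extends, and absorbing adjacent $A$-elements keeps it terminal.
\end{enumerate}

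\textbf{Main obstacle.} The hard part is showing that in case 2 an open spot above $a_j$ exists. Here I use the counting bound: the spots $\ge a_j$ number $n+1-a_j$. The cars so far occupy $j$ spots, and all of them lie at or above $a_j$, because $A_j$ does and $B_j$ sits at the top. (The spots of $B_j$ could only be below $a_j$ if the block had grown past $a_j$, which is impossible since $a_j$ itself is occupied and blocks the block from growing downward past it.) So I need $j<n+1-a_j$, that is, $a_{j+1}=a_j\le n-j$. This is exactly the parking function bound $a_{j+1}\le n+1-(j+1)$. Hence a free spot at or above $a_j$ exists, and it is not $a_j$ itself, so car $j+1$ parks and all $n$ cars park.

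\textbf{Count.} Combined with Proposition~\ref{prop:fpfsArePFs}, the weakly decreasing frustrated parking functions are exactly the weakly decreasing parking functions. These biject with weakly increasing parking functions by reversal, which are counted by the Catalan number $\frac{1}{n+1}\binom{2n}{n}$ via the standard correspondence with Dyck paths given by the condition $b_j\le j$.
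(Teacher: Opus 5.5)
Your proof is correct and is essentially the paper's counting argument run forward rather than by contradiction. The paper takes the first car that fails, say with preference $i$, notes that spots $i,\dots,n$ are then full and that every earlier car prefers a spot in $\{i,\dots,n\}$, and so contradicts the parking-function count; your comparison of $j$ occupied spots at or above $a_j$ with the bound $a_{j+1}\le n-j$ is the same inequality, and the Catalan count follows from Proposition~\ref{prop:fpfsArePFs} just as you say.

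Some cleanup is needed. The first version of your invariant is false: for $(5,3,1,1,1)$ the occupied set after three cars is $\{1,3,5\}$. Read literally, the unlucky spots alone also need not form a terminal block: for $(4,1,1,1)$ after three cars the only unlucky spot is $3$. None of this bookkeeping is needed, though. It suffices that each earlier car parks at or beyond its own preference, which is at least $a_{j+1}$, and this is also the correct justification in place of your parenthetical about $a_j$ ``blocking'' the block.
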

\begin{proof}
    Suppose this was not the case. Then there is some first car that cannot park. Suppose this car prefers spot $i$.  Then spots $i,i+1,\dots, n$ must all be filled. Since the sequence is weakly decreasing, every car that came before this one prefers a spot in $\{i,i+1,\dots, n\}$. But this means more cars prefer a spot in  $\{i,i+1,\dots, n\}$ than there are elements in  $\{i,i+1,\dots, n\}$ implying $(a_1,a_2,\dots, a_n)$ is not a parking function, a contradiction.
\end{proof}

\subsection{Counting all Frustrated Parking Functions}\label{sec:countingAll}
In this section, we prove that the number of frustrated parking functions of length $n$ is $(2n-1)!!$. We start by describing a way to associate a Dyck path to a frustrated parking function. The Dyck path will encode how the cars of the frustrated parking function behave. Dyck paths consist of unit up steps in the direction $(1,1)$ and unit down steps in the direction $(1,-1)$.  We will think of  Dyck paths  as strings consisting of the characters $U$ and $D$ corresponding to up steps and down steps respectively. For example, the Dyck path shown in Figure~\ref{fig:DyckPath} is the string $UUUDDDUUDUDDUUDD$. Given a Dyck path $D$, the \textit{semilength} of $D$ is the number of up steps (or down steps) in $D$. For example, the Dyck path shown in Figure~\ref{fig:DyckPath} has semilength $8$.\\

Before we describe this map, we need to define lucky cars and lucky spots.

\begin{definition}
    Let $(a_1,a_2,\dots, a_n)$ be a frustrated parking function. We say a car with preference $i$ is \textit{lucky} if it parks in spot $i$, otherwise we say it is \textit{unlucky}. Moreover, we say the spot $i$ is \textit{lucky} if the car that parks in spot $i$ is lucky. Otherwise, we call the spot \textit{unlucky}.
\end{definition}

\begin{example}
   Consider the frustrated parking function $(5,1,5,3,1,1)$.  Figure~\ref{fig:outcomeOfFPF} shows the outcome of parking with these preferences. Lucky cars are denoted in \textcolor{PineGreen}{green} and unlucky cars are denoted in \textcolor{BrickRed}{red}.
\end{example}

\begin{figure}

  \begin{tikzpicture}[scale=1]

\def\spotwidth{1.4}
\def\spotheight{1.2}

\draw[thick] (0,0) rectangle ({6*\spotwidth},\spotheight);

\foreach \i in {1,...,6} {
    \draw[thick] ({\i*\spotwidth},0) -- ({\i*\spotwidth},\spotheight);
}

\foreach \i in {1,...,6} {
    \node at ({(\i-0.5)*\spotwidth},-0.4) {\small \i};
}
\node at ({(1-0.5)*\spotwidth}, {0.5*\spotheight}) {$\textcolor{PineGreen}{C_2}$ };

\node at ({(2-0.5)*\spotwidth}, {0.5*\spotheight}) {$\textcolor{BrickRed}{C_6}$ };

\node at ({(3-0.5)*\spotwidth}, {0.5*\spotheight}) {$\textcolor{PineGreen}{C_4}$ };

\node at ({(4-0.5)*\spotwidth}, {0.5*\spotheight}) {$\textcolor{BrickRed}{C_5}$ };

\node at ({(5-0.5)*\spotwidth}, {0.5*\spotheight}) {$\textcolor{PineGreen}{C_1}$ };

\node at ({(6-0.5)*\spotwidth}, {0.5*\spotheight}) {$\textcolor{BrickRed}{C_3}$ };
\end{tikzpicture}
    \caption{Outcome of the frustrated parking function $(5,1,5,3,1,1)$. Lucky cars appear in \textcolor{PineGreen}{green} and unlucky cars appear in \textcolor{BrickRed}{red}. }
    \label{fig:outcomeOfFPF}
\end{figure}

From the figure, we see that the first, second, and fourth cars are lucky.  On the other hand, the first, third, and fourth spots are lucky.  We wish to point out that while clearly the number of lucky spots and lucky cars must always be the same, the set of the positions of the  lucky cars and the set of lucky spots may differ.\\

 For frustrated parking functions, it is quite easy to determine which cars/spots are lucky.

\begin{proposition}\label{prop:luckyIsSameAsAppearing}
    Let  $(a_1,a_2,\dots, a_n)$ be a frustrated parking function.  Car $j$ is lucky if and only if $a_i\neq a_j$  for all $1\leq i<j$.  Consequently, the set of lucky spots is $\{a_1,a_2,\dots, a_n\}$ and  the number of lucky cars  and spots is $|\{a_1,a_2, \dots, a_n\}|$.
\end{proposition}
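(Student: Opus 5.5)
The plan is to prove the two directions of the equivalence directly from the parking rule, and then read off the consequences. Throughout, we use that $(a_1,\dots,a_n)$ is a frustrated parking function, so every car parks somewhere.

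\emph{Only if.} Suppose some $i<j$ has $a_i=a_j$. When car $i$ has finished parking, spot $a_i$ is occupied. If car $i$ was lucky it sits in spot $a_i$; if it was unlucky, some earlier car already filled spot $a_i$. Spots never empty, so when car $j$ arrives, spot $a_j=a_i$ is full. Car $j$ therefore parks elsewhere and is unlucky.

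\emph{If.} Suppose $a_i\neq a_j$ for all $i<j$. We must show spot $a_j$ is still empty when car $j$ arrives. This is the main obstacle: an earlier \emph{unlucky} car might have taken spot $a_j$.

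\begin{itemize}
\item We prove the invariant: at every stage, the set of occupied spots is $L\cup U$, where $L$ is the set of preferences already seen. Here $U$ consists of the largest $|U|$ spots not in $L$, or more precisely $U$ is a terminal segment of the complement of $L$.
\item The key point is that an unlucky car always takes the largest empty spot. Since all cars eventually park, I would try to argue that an unlucky car never takes a spot that some later car prefers for the first time.
\item Suppose an unlucky car $m<j$ took spot $s=a_j$. At that moment $s$ was the largest empty spot, so all spots above $s$ were full.
\item Car $j$ prefers $s$, which is already full, so car $j$ must go to a larger empty spot. Since spots fill monotonically, everything above $s$ is still full when car $j$ arrives.
\item Hence car $j$ cannot park, contradicting the assumption that we have a frustrated parking function.
\end{itemize}

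So spot $a_j$ is empty when car $j$ arrives, and car $j$ is lucky.

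\emph{Consequences.} The lucky cars are exactly the first occurrences of each distinct value, and each such car parks in the spot equal to its value. So the set of lucky spots is $\{a_1,\dots,a_n\}$. The number of lucky cars equals the number of lucky spots, namely $|\{a_1,\dots,a_n\}|$.
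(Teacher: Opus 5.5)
Your proof is correct and is essentially the paper's argument. The forward direction uses that spot $a_i$ is occupied once car $i$ has parked. The reverse direction argues that if spot $a_j$ were already taken, it must have been by an unlucky car; that forces every spot from $a_j$ to $n$ to be full and leaves car $j$ unable to park.

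Two small points. The ``invariant'' in your first bullet is neither proved nor used, so drop it. And state explicitly that an earlier lucky car $m$ cannot occupy $a_j$, because it sits at its own preference $a_m\neq a_j$.
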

\begin{proof}
    $(\Rightarrow)$ If $a_i=a_j$ for some $1\leq i<j$, then car $j$ cannot be lucky as car $i$ either parked in spot $a_j$ or spot $a_j$ was full when car  $i$ parked. In either case, car $j$ cannot park in spot $a_j$.\\

    $(\Leftarrow)$ Now suppose that $a_i\neq a_j$ for all $1\leq i<j$. We claim that spot $a_j$ is open when car $j$ starts to park, implying that car $j$ is lucky. If not, since unlucky cars park as close to spot $n$ as possible,  all the spots  from $a_j$ to $n$ must be filled. But then car $j$ cannot park, contradicting that  $(a_1,a_2,\dots, a_n)$ is  a frustrated parking function. 
\end{proof}

Before we move on, let us note that the above theorem is not true for classical parking functions. For example, considering $(1,1,2)$ under the classical parking rule, only car 1 is lucky, but as a frustrated parking function, the first and third cars are both lucky.\\

We are now ready to define our map from frustrated parking functions to Dyck paths. Let $ \mathrm{FPF}_n$ denote the set of frustrated parking functions of length $n$ and let $\mathrm{Dyck}_n$ denote the set of Dyck paths of semilength $n$. We will use $\varphi:  \mathrm{FPF}_n\rightarrow \mathrm{Dyck}_n$   to denote our map.

\begin{definition}[Frustrated Parking Function to Dyck Path Map]\label{def:FPFtoDyck}
    Given a frustrated parking function $(a_1,a_2,\dots, a_n)$, the associated Dyck path $\varphi(a_1,a_2,\dots, a_n)$, is  obtained by watching the cars park in the order they enter the lot. We start with an empty path. If the current car parking is lucky, we append an up step to our current path.  If the car is unlucky and is not the first unlucky car, we find the spot in which the previous unlucky car parked. Call this spot $s$. If the car is unlucky and it is the first unlucky car, we set $s=n+1$.  Now, suppose that our unlucky car parks in spot $t$.  Then we will append $s-t$ down steps  followed by an up step. We can think of this as adding down steps for each lucky car between the previous unlucky car and the new one. Then we add one  more down step when our unlucky car parked followed by   an up step to account for a new  car entering the parking lot. Once all cars have entered the lot, we finish by adding enough down steps so that the resulting path is a Dyck path.  Note that this last string of down steps counts the lucky cars  which do not have unlucky cars parked in an earlier spot than them.
\end{definition}

Throughout the rest of the paper, we always use $\varphi$ to denote the map from the previous definition.  Before we prove $\varphi$ is well-defined, let us illustrate our map with an example.

\begin{example}\label{ex:fpfToHDyck}
    Let us find the Dyck path for $(2,1,2,3,1,6,3,1)$. In this example, we will  be labeling the parking spots based on if the car parked there is lucky or unlucky. We use $L$ for lucky and $U$ for unlucky. \\
    \newpage
   
 The first car enters and parks in its preferred spot $2$. As it is lucky, we label with an $L$.
 \begin{center}
     
 \begin{tikzpicture}[scale=1]

\def\spotwidth{1.4}
\def\spotheight{1.2}

\draw[thick] (0,0) rectangle ({8*\spotwidth},\spotheight);

\foreach \i in {1,...,7} {
    \draw[thick] ({\i*\spotwidth},0) -- ({\i*\spotwidth},\spotheight);
}

\foreach \i in {1,...,8} {
    \node at ({(\i-0.5)*\spotwidth},-0.4) {\small \i};
}
\node at ({(2-0.5)*\spotwidth}, {0.5*\spotheight}) {$L$};

\end{tikzpicture}
    \end{center}
   Since we added a lucky car, we add an up step to our Dyck path to get

    \begin{center}
        \begin{tikzpicture}[scale=1]

\draw[very thick]
(0,0) -- (1,1);

\fill (0,0) circle (3pt);
\fill (1,1) circle (3pt);

\end{tikzpicture}
    \end{center}
     
    Next, car 2 enters and parks in its preferred spot 1. Since it is lucky, we label the spot with an $L$
     \begin{center}
     
 \begin{tikzpicture}[scale=1]

\def\spotwidth{1.4}
\def\spotheight{1.2}

\draw[thick] (0,0) rectangle ({8*\spotwidth},\spotheight);

\foreach \i in {1,...,7} {
    \draw[thick] ({\i*\spotwidth},0) -- ({\i*\spotwidth},\spotheight);
}

\foreach \i in {1,...,8} {
    \node at ({(\i-0.5)*\spotwidth},-0.4) {\small \i};
}
\node at ({(2-0.5)*\spotwidth}, {0.5*\spotheight}) {$L$};
\node at ({(1-0.5)*\spotwidth}, {0.5*\spotheight}) {$L$};

\end{tikzpicture}
    \end{center}
  Since we added a lucky car, we add an up step to our Dyck path to get

    \begin{center}
        \begin{tikzpicture}[scale=1]

\draw[very thick]
(0,0) -- (1,1) -- (2,2);

\foreach \x/\y in {0/0,1/1,2/2} {
  \fill (\x,\y) circle (3pt);
}

\end{tikzpicture}
    \end{center}

Car 3 prefers spot 2, but the spot is filled.  It enters the lot and parks in spot 8. Since it is unlucky, we label the spot with a $U$.
     \begin{center}
     
 \begin{tikzpicture}[scale=1]

\def\spotwidth{1.4}
\def\spotheight{1.2}

\draw[thick] (0,0) rectangle ({8*\spotwidth},\spotheight);

\foreach \i in {1,...,7} {
    \draw[thick] ({\i*\spotwidth},0) -- ({\i*\spotwidth},\spotheight);
}

\foreach \i in {1,...,8} {
    \node at ({(\i-0.5)*\spotwidth},-0.4) {\small \i};
}
\node at ({(2-0.5)*\spotwidth}, {0.5*\spotheight}) {$L$};
\node at ({(1-0.5)*\spotwidth}, {0.5*\spotheight}) {$L$};
\node at ({(8-0.5)*\spotwidth}, {0.5*\spotheight}) {$U$};
\end{tikzpicture}
    \end{center}

Since we added our first unlucky car, we set $s=8+1=9$. It parks in spot $8$, so we add $9-8=1$ down steps followed by an up step.
\begin{center}
\begin{tikzpicture}[scale=1]

\def\step{4}

\foreach[count=\i from 0] \x/\y in {
0/0,1/1,2/2,3/1,4/2,5/3,6/2,7/3,8/4,
9/3,10/2,11/3,12/2,13/3,14/2,15/1,16/0
}{
  \coordinate (v\i) at (\x,\y);
}

\draw[very thick]
(v0)
\foreach \i in {1,...,\step} {
  -- (v\i)
};

\foreach \i in {0,...,\step} {
  \fill (v\i) circle (3pt);
}

\end{tikzpicture}
\end{center}

The fourth car is lucky and parks in its preferred spot which is 3. Since it is lucky, we label the spot  by an $L$.

     \begin{center}
     
 \begin{tikzpicture}[scale=1]

\def\spotwidth{1.4}
\def\spotheight{1.2}

\draw[thick] (0,0) rectangle ({8*\spotwidth},\spotheight);

\foreach \i in {1,...,7} {
    \draw[thick] ({\i*\spotwidth},0) -- ({\i*\spotwidth},\spotheight);
}

\foreach \i in {1,...,8} {
    \node at ({(\i-0.5)*\spotwidth},-0.4) {\small \i};
}
\node at ({(2-0.5)*\spotwidth}, {0.5*\spotheight}) {$L$};
\node at ({(1-0.5)*\spotwidth}, {0.5*\spotheight}) {$L$};
\node at ({(8-0.5)*\spotwidth}, {0.5*\spotheight}) {$U$};
\node at ({(3-0.5)*\spotwidth}, {0.5*\spotheight}) {$L$};
\end{tikzpicture}
    \end{center}

We added a lucky car, so we add an up step to our Dyck path. 
\begin{center}
\begin{tikzpicture}[scale=1]

\def\step{5}

\foreach[count=\i from 0] \x/\y in {
0/0,1/1,2/2,3/1,4/2,5/3,6/2,7/3,8/4,
9/3,10/2,11/3,12/2,13/3,14/2,15/1,16/0
}{
  \coordinate (v\i) at (\x,\y);
}

\draw[very thick]
(v0)
\foreach \i in {1,...,\step} {
  -- (v\i)
};

\foreach \i in {0,...,\step} {
  \fill (v\i) circle (3pt);
}

\end{tikzpicture}
\end{center}

Car 5 prefers spot 1, but it is filled.  So  it is unlucky and parks in spot $7$, which we label with a $U$.

\begin{center}
    
 \begin{tikzpicture}[scale=1]

\def\spotwidth{1.4}
\def\spotheight{1.2}

\draw[thick] (0,0) rectangle ({8*\spotwidth},\spotheight);

\foreach \i in {1,...,7} {
    \draw[thick] ({\i*\spotwidth},0) -- ({\i*\spotwidth},\spotheight);
}

\foreach \i in {1,...,8} {
    \node at ({(\i-0.5)*\spotwidth},-0.4) {\small \i};
}
\node at ({(2-0.5)*\spotwidth}, {0.5*\spotheight}) {$L$};
\node at ({(1-0.5)*\spotwidth}, {0.5*\spotheight}) {$L$};
\node at ({(8-0.5)*\spotwidth}, {0.5*\spotheight}) {$U$};
\node at ({(3-0.5)*\spotwidth}, {0.5*\spotheight}) {$L$};
\node at ({(7-0.5)*\spotwidth}, {0.5*\spotheight}) {$U$};
\end{tikzpicture}
    \end{center}

This unlucky car parked in spot $7$ and the previous unlucky car parked in spot $8$. So we add $8-7=1$ down steps followed by an up step.

\begin{center}
\begin{tikzpicture}[scale=1]

\def\step{7}

\foreach[count=\i from 0] \x/\y in {
0/0,1/1,2/2,3/1,4/2,5/3,6/2,7/3,8/4,
9/3,10/2,11/3,12/2,13/3,14/2,15/1,16/0
}{
  \coordinate (v\i) at (\x,\y);
}

\draw[very thick]
(v0)
\foreach \i in {1,...,\step} {
  -- (v\i)
};

\foreach \i in {0,...,\step} {
  \fill (v\i) circle (3pt);
}

\end{tikzpicture}
\end{center}

Now car six enters. It is lucky and parks in its preferred spot 6. Since it is lucky, we label the spot as  $L$.
\begin{center}
    
 \begin{tikzpicture}[scale=1]

\def\spotwidth{1.4}
\def\spotheight{1.2}

\draw[thick] (0,0) rectangle ({8*\spotwidth},\spotheight);

\foreach \i in {1,...,7} {
    \draw[thick] ({\i*\spotwidth},0) -- ({\i*\spotwidth},\spotheight);
}

\foreach \i in {1,...,8} {
    \node at ({(\i-0.5)*\spotwidth},-0.4) {\small \i};
}
\node at ({(2-0.5)*\spotwidth}, {0.5*\spotheight}) {$L$};
\node at ({(1-0.5)*\spotwidth}, {0.5*\spotheight}) {$L$};
\node at ({(8-0.5)*\spotwidth}, {0.5*\spotheight}) {$U$};
\node at ({(3-0.5)*\spotwidth}, {0.5*\spotheight}) {$L$};
\node at ({(7-0.5)*\spotwidth}, {0.5*\spotheight}) {$U$};
\node at ({(6-0.5)*\spotwidth}, {0.5*\spotheight}) {$L$};
\end{tikzpicture}
    \end{center}

We added a lucky car, so we add an up step to our Dyck path.

\begin{center}
\begin{tikzpicture}[scale=1]

\def\step{8}

\foreach[count=\i from 0] \x/\y in {
0/0,1/1,2/2,3/1,4/2,5/3,6/2,7/3,8/4,
9/3,10/2,11/3,12/2,13/3,14/2,15/1,16/0
}{
  \coordinate (v\i) at (\x,\y);
}

\draw[very thick]
(v0)
\foreach \i in {1,...,\step} {
  -- (v\i)
};

\foreach \i in {0,...,\step} {
  \fill (v\i) circle (3pt);
}

\end{tikzpicture}
\end{center}

Now car seven enters. It  prefers spot 3, but that spot is filled and so parks in spot $5$.  Since it is unlucky, we label spot $5$ by $U$.
\begin{center}
    
 \begin{tikzpicture}[scale=1]

\def\spotwidth{1.4}
\def\spotheight{1.2}

\draw[thick] (0,0) rectangle ({8*\spotwidth},\spotheight);

\foreach \i in {1,...,7} {
    \draw[thick] ({\i*\spotwidth},0) -- ({\i*\spotwidth},\spotheight);
}

\foreach \i in {1,...,8} {
    \node at ({(\i-0.5)*\spotwidth},-0.4) {\small \i};
}
\node at ({(2-0.5)*\spotwidth}, {0.5*\spotheight}) {$L$};
\node at ({(1-0.5)*\spotwidth}, {0.5*\spotheight}) {$L$};
\node at ({(8-0.5)*\spotwidth}, {0.5*\spotheight}) {$U$};
\node at ({(3-0.5)*\spotwidth}, {0.5*\spotheight}) {$L$};
\node at ({(7-0.5)*\spotwidth}, {0.5*\spotheight}) {$U$};
\node at ({(6-0.5)*\spotwidth}, {0.5*\spotheight}) {$L$};
\node at ({(5-0.5)*\spotwidth}, {0.5*\spotheight}) {$U$};
\end{tikzpicture}
    \end{center}
This unlucky car parked in spot $5$ and the previous unlucky car parked in spot $7$. So, we add $7-5=2$ down steps and then an up step to our  path.

\begin{center}
\begin{tikzpicture}[scale=1]

\def\step{11}

\foreach[count=\i from 0] \x/\y in {
0/0,1/1,2/2,3/1,4/2,5/3,6/2,7/3,8/4,
9/3,10/2,11/3,12/2,13/3,14/2,15/1,16/0
}{
  \coordinate (v\i) at (\x,\y);
}

\draw[very thick]
(v0)
\foreach \i in {1,...,\step} {
  -- (v\i)
};

\foreach \i in {0,...,\step} {
  \fill (v\i) circle (3pt);
}

\end{tikzpicture}
\end{center}

Our final car enters the lot. It prefers spot 1, but this spot is filled. So it is unlucky and will park in spot 4 which we label by a $U$.

\begin{center}
    
 \begin{tikzpicture}[scale=1]

\def\spotwidth{1.4}
\def\spotheight{1.2}

\draw[thick] (0,0) rectangle ({8*\spotwidth},\spotheight);

\foreach \i in {1,...,7} {
    \draw[thick] ({\i*\spotwidth},0) -- ({\i*\spotwidth},\spotheight);
}

\foreach \i in {1,...,8} {
    \node at ({(\i-0.5)*\spotwidth},-0.4) {\small \i};
}
\node at ({(2-0.5)*\spotwidth}, {0.5*\spotheight}) {$L$};
\node at ({(1-0.5)*\spotwidth}, {0.5*\spotheight}) {$L$};
\node at ({(8-0.5)*\spotwidth}, {0.5*\spotheight}) {$U$};
\node at ({(3-0.5)*\spotwidth}, {0.5*\spotheight}) {$L$};
\node at ({(7-0.5)*\spotwidth}, {0.5*\spotheight}) {$U$};
\node at ({(6-0.5)*\spotwidth}, {0.5*\spotheight}) {$L$};
\node at ({(5-0.5)*\spotwidth}, {0.5*\spotheight}) {$U$};
\node at ({(4-0.5)*\spotwidth}, {0.5*\spotheight}) {$U$};
\end{tikzpicture}

    \end{center}
Our unlucky car parked in spot $4$ and the previous unlucky car parked in spot $5$. So, we add $5-4=1$ down steps followed by an up step to our path.

\begin{center}
\begin{tikzpicture}[scale=1]

\def\step{13}

\foreach[count=\i from 0] \x/\y in {
0/0,1/1,2/2,3/1,4/2,5/3,6/2,7/3,8/4,
9/3,10/2,11/3,12/2,13/3,14/2,15/1,16/0
}{
  \coordinate (v\i) at (\x,\y);
}

\draw[very thick]
(v0)
\foreach \i in {1,...,\step} {
  -- (v\i)
};

\foreach \i in {0,...,\step} {
  \fill (v\i) circle (3pt);
}

\end{tikzpicture}
\end{center}
We then finish the Dyck path by adding 3 more down steps. This is to make sure we get a Dyck path.  Note that this also reflects that there are no unlucky cars that parked to the left of the cars parked in spots $1,2$, and $3$.
\begin{center}
\begin{tikzpicture}[scale=1]

\def\step{16}

\foreach[count=\i from 0] \x/\y in {
0/0,1/1,2/2,3/1,4/2,5/3,6/2,7/3,8/4,
9/3,10/2,11/3,12/2,13/3,14/2,15/1,16/0
}{
  \coordinate (v\i) at (\x,\y);
}

\draw[very thick]
(v0)
\foreach \i in {1,...,\step} {
  -- (v\i)
};

\foreach \i in {0,...,\step} {
  \fill (v\i) circle (3pt);
}

\end{tikzpicture}
\end{center}

\end{example}

\begin{lemma}
   The map $\varphi: \mathrm{FPF}_n\rightarrow \mathrm{Dyck}_n$ in Definition~\ref{def:FPFtoDyck} is  well-defined.
\end{lemma}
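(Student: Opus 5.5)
We need to show that the word produced by $\varphi$ is a Dyck path of semilength $n$. Three things have to be checked: each car contributes exactly one up step, so there are $n$ up steps; the numbers of down steps $s-t$ are nonnegative; and the path never goes below the axis. Once these hold, the final run of down steps (of length equal to the current height) is well defined and brings the path to the axis, giving semilength $n$.

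The key structural fact is that unlucky cars park in strictly decreasing spots. If car $j$ is unlucky, its preferred spot $a_j$ was already taken (Proposition~\ref{prop:luckyIsSameAsAppearing}). It parks in the largest empty spot $t$, so at that moment every spot in $(t,n]$ is occupied. A later unlucky car parks in the largest empty spot at its own time, and this is at most the largest empty spot remaining, which is $<t$. Hence the spots $n+1=s_0>t_1>t_2>\cdots>t_m$ of the successive unlucky cars strictly decrease, so each $s-t\ge 1$. In particular the number of down steps is a well-defined positive integer.

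For nonnegativity I would prove an invariant by induction over the cars. Immediately after the step(s) for car $j$, the height of the path equals the number of lucky cars among cars $1,\dots,j$ parked in spots $<t$, plus $1$ if car $j$ is unlucky, where $t$ is the spot of the most recent unlucky car ($t=n+1$ if none yet). Equivalently, the height is the number of cars parked strictly left of the last unlucky spot, counting that unlucky car itself once. The crucial observation is that when an unlucky car parks at $t$, all spots in $(t,s)$ are already filled. Since $s-1$ was the previous largest empty spot, these spots were filled after the previous unlucky car, and only lucky cars parked in between, so they are all filled by lucky cars. Thus among the cars counted by the height before this step (spots $<s$, plus the previous unlucky car at $s$), at least $s-t-1$ lucky ones sit in $(t,s)$ and the previous unlucky car is counted once. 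So the height is at least $s-t$ before the $s-t$ down steps. If it is the first unlucky car, the spots $t+1,\dots,n$ are all lucky and the height counts every car so far, so again the height is $\ge s-t$.

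After the down steps the height is $\ge 0$, and the up step restores the invariant with the new unlucky car counted. Lucky cars add $1$, preserving the invariant, since any lucky car after an unlucky car at $t$ must park below $t$. The main obstacle is stating and verifying this invariant cleanly, in particular the claim that spots strictly between consecutive unlucky spots hold only lucky cars that have already been counted. Finally, the total number of down steps, including the final run equal to the terminal height, is forced to be $n$, so $\varphi$ outputs an element of $\mathrm{Dyck}_n$.
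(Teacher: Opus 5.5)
Your proof has a genuine gap: the height invariant is off by one, and the extra unit that the key inequality needs is credited to the wrong car. Compute the height directly. After an unlucky car $j$ parks at $t$, the down steps so far telescope to $(n+1)-t$, and $j$ up steps have been drawn. Spots $t+1,\dots,n$ are full and car $j$ sits at $t$, so the height equals the number of cars in spots $<t$, all of them lucky. There is no ``$+1$'' for the unlucky car. For example, for $(1,1)$ the path after car $2$ is $UDU$, of height $1$, while your formula gives $2$. Your invariant already fails at the first unlucky car, so the induction as described does not go through.

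This breaks the key step in two places. When you argue that the height before the $s-t$ down steps is at least $(s-t-1)+1$ because ``the previous unlucky car is counted once,'' that $+1$ does not exist. In your first-unlucky-car case, the spots $t+1,\dots,n$ supply only $n-t=s-t-1$ cars, so ``$\ge s-t$'' is asserted without justification.

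The missing idea is exactly the paper's observation. The unlucky car's preferred spot $a_j$ is occupied, and $a_j<t$, since the car must drive forward to $t$ and $a_j$ is full while $t$ is empty. The car at $a_j$ is lucky, because every unlucky car so far sits at a spot $\ge s>t$. So this car lies in spots $<s$ but outside $(t,s)$.

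With the corrected invariant (before car $j$, height $=$ number of cars in spots $<s$), this gives height $\ge (s-t-1)+1=s-t$, as needed. Lucky cars preserve the corrected invariant because they park below the last unlucky spot. With this repair, your argument becomes a careful quantitative version of the paper's matching argument, which charges each unlucky car's down step to a lucky car that has not yet received its down step.

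A smaller point: the spots in $(t,s)$ need not have been filled after the previous unlucky car, and $s-1$ need not have been the largest empty spot then. In $(3,1,1,1,1)$, spot $3$ is filled by car $1$, before the unlucky car at spot $4$. Those spots are nonetheless lucky by the strict decrease of unlucky spots, which you proved correctly.
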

\begin{proof}
    Note that each car that enters adds both an up step and a down step. The lucky cars add an up step when they enter, and a down step when an unlucky car parks in a spot smaller than their spot or at the end of the algorithm (which counts cars that are not between two unlucky cars). For unlucky cars, we add a down step followed immediately by an up step when they park. So, we will have a total of $n$ up steps and $n$ down steps. Now we need to ensure that the path constructed by $\varphi$ never crosses below the $x$-axis.  Note that lucky cars always add an up step before a down step, so they can never be the reason why the path crosses below the $x$-axis. The unlucky cars add a down step followed immediately by an up step.  Note that for an unlucky car to park, there must be a lucky car parked in its preferred spot and that lucky car must have parked in a smaller spot than the unlucky car. So, there is always at least one lucky car that has not added a down  step yet. Thus when an unlucky car adds a down step there is always at least one up step to balance it out. We conclude that $\varphi$ is well-defined.
\end{proof}

We now introduce some definitions and notation concerning Dyck paths. Given a Dyck path $D$, an \textit{ascent} is a maximal sequence of up steps. A \textit{descent} is a maximal sequence of down steps.  A \textit{peak} occurs when we transition from an ascent to a descent. Moreover, a \textit{valley} occurs when we transition from a descent to an ascent. We use the notation $x_i$ for the length of the $i^{th}$ ascent of $D$ and $y_i$ for the length  of the $i^{th}$ descent.  For example, in the Dyck path in Figure~\ref{fig:DyckPath}, we have that

$$
    x_1 = 3 \qquad x_2=2 \qquad x_3=1 \qquad x_4=2
    $$
    and 
    $$
    y_1=3 \qquad y_2= 1 \qquad y_3=2 \qquad y_4=2
    $$

 \begin{figure}
\begin{center}
\begin{tikzpicture}[scale=.8]

\def\step{16}

\foreach[count=\i from 0] \x/\y in {
0/0,1/1,2/2,3/3,4/2,5/1,6/0,
7/1,8/2,9/1,10/2,11/1,12/0,
13/1,14/2,15/1,16/0
}{
  \coordinate (v\i) at (\x,\y);
}

\draw[very thick]
(v0)
\foreach \i in {1,...,\step} {
  -- (v\i)
};

\foreach \i in {0,...,\step} {
  \fill (v\i) circle (3pt);
}

\end{tikzpicture}
\end{center}
\caption{A Dyck Path}
     \label{fig:DyckPath}
 \end{figure}

\begin{lemma}\label{lem:easyConsequencesOfDyckMap}
    Let $(a_1,a_2,\dots, a_n)\in \mathrm{FPF}_n$ and let $D=\varphi(a_1,a_2,\dots, a_n)$ (see Definition~\ref{def:FPFtoDyck}). Then we have the following.
    \begin{enumerate}
        \item[(a)] The $i^{th}$ unlucky car of $(a_1,a_2,\dots, a_n)$ is  in position 
    $$
    x_1+x_2+\cdots +x_{i}+1
    $$
    of $(a_1,a_2,\dots, a_n)$.
     \item [(b)] The number of peaks of $D$ is one more than the number of unlucky cars of $(a_1,a_2,\dots, a_n)$.
    \item [(c)] We have that
    $$
    \{a_1,a_2,\dots, a_n\} = [n]\setminus \{(n+1)-y_1, (n+1)-(y_1+y_2),\dots, (n+1)-(y_1+y_2+\cdots+ y_{k-1})\} 
    $$
    where $k$ is the number of peaks of $D$.
    \item [(d)] The number of lucky cars in $(a_1,a_2,\dots, a_n)$ is $n-k+1$ where $k$ is the number of peaks of $D$.
    \end{enumerate}
\end{lemma}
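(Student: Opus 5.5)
The whole lemma comes from reading off the structure of $D=\varphi(a_1,\dots,a_n)$ from Definition~\ref{def:FPFtoDyck}. In the word for $D$, a lucky car contributes a single $U$ when it enters. The $j$-th unlucky car contributes a block $D^{s_{j-1}-s_j}U$, where $s_0=n+1$ and $s_j$ is the spot of the $j$-th unlucky car. The word then ends with a final run of $D$'s. First I will note that each of these down-runs is nonempty. Unlucky cars park as far right as possible, so the spots $s_1>s_2>\cdots$ are strictly decreasing: when unlucky car $j$ parks, spot $s_{j-1}$ is taken, and every spot to the right of its chosen spot is full, so $s_j<s_{j-1}$. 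Also $s_0-s_1\ge 1$, and the final run is nonempty because the path ends at height $0$ after a $U$ step.

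With this in hand, $D$ has the form
$$U^{x_1}D^{y_1}U^{x_2}D^{y_2}\cdots U^{x_k}D^{y_k}.$$
Here each descent $D^{y_j}$ with $j<k$ is exactly the run added by the $j$-th unlucky car, so $y_j=s_{j-1}-s_j$. The up step right after that run begins the next ascent, and $D^{y_k}$ is the final run. Hence if there are $m$ unlucky cars there are $m+1$ descents, so $k=m+1$ peaks. This is (b).

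For (a), count up steps before the $i$-th unlucky car's down-run. Each car contributes exactly one $U$: lucky cars as their own step, unlucky cars as the $U$ after their down-run. So the number of cars preceding the $i$-th unlucky car is the number of up steps before the $i$-th descent, namely $x_1+\cdots+x_i$. Its position is therefore $x_1+\cdots+x_i+1$.

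For (c), telescope: $y_1+\cdots+y_j=s_0-s_j=n+1-s_j$. So the set of unlucky spots is $\{(n+1)-(y_1+\cdots+y_j): 1\le j\le k-1\}$. By Proposition~\ref{prop:luckyIsSameAsAppearing}, the lucky spots are exactly $\{a_1,\dots,a_n\}$, and every spot in $[n]$ is filled, so $\{a_1,\dots,a_n\}$ is the complement in $[n]$ of the unlucky spots. Finally, (d) follows from (b): there are $n-(k-1)$ lucky cars. Alternatively, take the cardinality in (c), using that the $k-1$ removed values are distinct because the partial sums strictly increase.

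The only step needing real care is the first one: that the decomposition into ascents and descents matches the car-by-car blocks. This requires each down-run to be nonempty, so that distinct unlucky cars never merge into one descent and no descent is empty. That rests on the strict decrease $s_j<s_{j-1}$, which I will justify as above from the frustrated rule: an unlucky car takes the largest empty spot, and $s_{j-1}$ is occupied at that time.
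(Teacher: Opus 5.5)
Your proof is correct and follows essentially the same route as the paper: you read the ascents and descents of $D$ directly off the car-by-car blocks in Definition~\ref{def:FPFtoDyck}, count up steps for (a) and (b), and use $y_j=s_{j-1}-s_j$ together with Proposition~\ref{prop:luckyIsSameAsAppearing} for (c). You explicitly check that every down-run is nonempty, which the paper leaves implicit (it only notes that spot $1$ is lucky, for the final descent); the cleanest phrasing of the strict decrease is that once unlucky car $j-1$ takes the largest empty spot $s_{j-1}$, every spot $\ge s_{j-1}$ is full, so $s_j<s_{j-1}$.
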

\begin{proof}
    For (a), note that when  the $i^{th}$ unlucky car enters, we end the $i^{th}$ ascent. Each car that has already entered  has added one up step to exactly one previous ascent.  So by the time the $i^{th}$ unlucky car enters, 
    $$
    x_1+x_2+\cdots + x_i
    $$
    cars have entered. The result now follows.\\

    To show (b), just note that each unlucky car ends an ascent and starts a descent. That is, it creates a peak. There is one more descent added at the  end of computing $D$ to account for any lucky car that has no unlucky cars parked in a smaller spot. Since spot 1 is always lucky, this extra descent must always be added when finding $D$. Hence, the number of peaks is the number of unlucky cars plus 1.\\
 
    To verify  (c), we can show the set of unlucky spots is 
    $$
    \{(n+1)-y_1, (n+1)-(y_1+y_2),\dots, (n+1)-(y_1+y_2+\cdots+ y_{k-1})\}
    $$ 
    When the first unlucky car parks, the number of lucky cars that parked in a larger spot is $y_1-1$ since each down step of the first descent, except for the last one, counts such lucky cars.  It follows that the first unlucky car is parked in spot $n-(y_1-1)=(n+1)-y_1$. Using this fact and that the number of lucky cars parked between the $(i-1)^{th}$ and $i^{th}$ unlucky car is $y_i-1$ gives us our desired result.\\

    Finally, note that (d) follows immediately from part (b).
\end{proof}

As we saw in Proposition~\ref{prop:weaklyDecPFAreFPFs}, the number of weakly decreasing frustrated parking functions of length $n$ is the $n^{th}$ Catalan number and hence the same as the number of Dyck paths
of semilength $n$.  Since not all frustrated parking functions are weakly decreasing, we see that the frustrated parking function to Dyck path map $\varphi$ cannot be injective. However, as we will see in the next lemma, the map is surjective.

\begin{lemma}\label{lem:DyckPathMapIsSurj}
    The map $\varphi: \mathrm{FPF}_n\rightarrow \mathrm{Dyck}_n$ in Definition~\ref{def:FPFtoDyck}  is surjective.
\end{lemma}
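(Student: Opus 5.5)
Given a Dyck path $D$ of semilength $n$ with ascent lengths $x_1,\dots,x_k$ and descent lengths $y_1,\dots,y_k$, we will build a preimage explicitly. Lemma~\ref{lem:easyConsequencesOfDyckMap} dictates its shape. There must be $k-1$ unlucky cars, the $i^{th}$ of them in position $x_1+\cdots+x_i+1$ for $1\le i\le k-1$, and all other positions must hold lucky cars. The unlucky spots must be $u_i=(n+1)-(y_1+\cdots+y_i)$ for $1\le i\le k-1$, and the lucky spots must be $L=[n]\setminus\{u_1,\dots,u_{k-1}\}$.

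The natural candidate is a weakly decreasing-style choice. Assign the lucky spots to the lucky cars in \emph{decreasing} order: the first lucky car prefers the largest element of $L$, the next lucky car the next largest, and so on. Then give each unlucky car a preference equal to some spot already occupied by a lucky car that lies below its target spot; the simplest choice is the smallest lucky spot used so far. With lucky preferences decreasing, the lucky cars that have entered before the $i^{th}$ unlucky car occupy exactly the $x_1+\cdots+x_i-(i-1)$ largest elements of $L$. The construction intends the unlucky car to land in $u_i$, so we must check that at that moment $u_i$ is the largest empty spot and that its preference is a filled spot below $u_i$.

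The key verification is an inequality between these counts, obtained from the Dyck condition. We will show by induction on $i$ that just before the $i^{th}$ unlucky car enters:
\begin{itemize}
\item[(1)] all spots above $u_i$ are filled, namely by $u_1,\dots,u_{i-1}$ together with the elements of $L$ exceeding $u_i$, of which there are $(y_1-1)+\cdots+(y_i-1)$;
\item[(2)] some lucky car sits below $u_i$.
\end{itemize}
Both claims reduce to the statement
\[
x_1+\cdots+x_i-(i-1) > (y_1+\cdots+y_i)-i,
\]
that is, $x_1+\cdots+x_i > y_1+\cdots+y_i-1$. This holds because the height of the path after the $i^{th}$ descent is nonnegative. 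The strict excess is exactly what guarantees that a lucky car sits below $u_i$, so the unlucky car's preference is already occupied. Meanwhile, ``decreasing order'' guarantees that the lucky spots above $u_i$ are filled first, which makes (1) work. We must also check that each lucky car's target spot is empty when it arrives, so that it really is lucky; this is clear because each lucky spot is used once and unlucky cars only take the $u_i$.

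The main obstacle is the bookkeeping in this induction. One must make sure that the lucky spots above $u_i$ fill before spots below $u_i$, and that the unlucky car's chosen preference is indeed occupied and lies below $u_i$. Once this is done, the resulting sequence is a frustrated parking function by construction, since every car parks. Recomputing $\varphi$ then reproduces the ascent positions and the descent lengths: the differences $u_{i-1}-u_i=y_i$, with $u_0=n+1$, and the final descent has length equal to the number of remaining lucky spots, $y_k$. Hence $\varphi$ of the sequence is $D$, and $\varphi$ is surjective.
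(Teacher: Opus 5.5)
Your proof is correct, and it takes a different route from the paper. The paper never constructs a preimage. It first uses Proposition~\ref{prop:weaklyDecPFAreFPFs} to identify weakly decreasing frustrated parking functions with weakly decreasing classical ones, which are Catalan many. It then shows $\varphi$ is injective on them, using parts (a) and (c) of Lemma~\ref{lem:easyConsequencesOfDyckMap}: two distinct weakly decreasing sequences with the same value set must differ in luckiness at their first differing position. Surjectivity then follows by cardinality.

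Your sequence gives lucky cars the elements of $L$ in decreasing order and has each unlucky car repeat the most recent lucky value, so it is weakly decreasing. You are therefore explicitly writing down the inverse of the bijection the paper obtains by counting. Your verification holds: before the $i^{th}$ unlucky car, the occupied lucky spots are the $x_1+\cdots+x_i-(i-1)$ largest elements of $L$, and exactly $\sum_{j\le i}(y_j-1)$ elements of $L$ exceed $u_i$. The Dyck condition $x_1+\cdots+x_i\ge y_1+\cdots+y_i$ then gives your strict inequality. The non-strict version already suffices for (1). The strictness is what places the copied preference below $u_i$; occupancy of that preference is automatic. So ``so the unlucky car's preference is already occupied'' should read ``lies below $u_i$.''

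Your approach is self-contained and constructive: it avoids both Proposition~\ref{prop:weaklyDecPFAreFPFs} and the Catalan enumeration of weakly decreasing parking functions. The paper's argument is shorter given those facts. It also records explicitly that $\varphi$ restricted to weakly decreasing sequences is a bijection, which the later corollary on Dyck paths avoiding $DUU$ reuses. You can recover that bijection by noting that any weakly decreasing preimage of $D$ is forced to be your sequence: first occurrences must appear in decreasing order, and each repeat must copy the previous value.
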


\begin{proof}
    By Proposition~\ref{prop:weaklyDecPFAreFPFs}, we have that the number of weakly decreasing frustrated parking functions of length $n$ is the same as the number of Dyck paths of semilength $n$. So, it suffices to show that if $(a_1,a_2,\dots, a_n)$ and $(b_1,b_2,\dots, b_n)$ are distinct weakly decreasing frustrated parking functions, then 
    $$
    \varphi(a_1,a_2,\dots, a_n)\neq \varphi(b_1,b_2,\dots, b_n)
    $$
   
    By
    Lemma~\ref{lem:easyConsequencesOfDyckMap} part (c), if $\{a_1,a_2,\dots, a_n\}\neq \{b_1,b_2,\dots, b_n\}$, then  $\varphi(a_1,a_2,\dots, a_n)\neq \varphi(b_1,b_2,\dots, b_n)$. So we can now assume that $\{a_1,a_2,\dots, a_n\}= \{b_1,b_2,\dots, b_n\}$.   Since $ (a_1,a_2,\dots, a_n)\neq(b_1,b_2,\dots, b_n)$, there exists a smallest $i$ such that $a_i\neq b_i$.
    Moreover, we claim that because  $\{a_1,a_2,\dots, a_n\}= \{b_1,b_2,\dots, b_n\}$ and   both $(a_1,a_2, \dots, a_n)$ and $(b_1,b_2,\dots, b_n)$ are weakly decreasing, it must be that in one of $(a_1,a_2,\dots, a_n)$ and $(b_1,b_2,\dots, b_n)$ the $i^{th}$ car is lucky and in the other it is unlucky. Indeed, if both were unlucky, then 
    $$
    a_i = a_{i-1}=b_{i-1}=b_i
    $$
    which is impossible since $a_i\neq b_i$. If both are lucky, then since the strings are weakly decreasing, $a_i$ and $b_i$ must both be the next smallest element of $\{a_1,a_2,\dots,a_n\}=\{b_1,b_2,\dots, b_n\}$  after $a_{i-1}=b_{i-1}$. This again implies that $a_i=b_i$, which is impossible. Now apply Lemma~\ref{lem:easyConsequencesOfDyckMap} part (a) to see that since the unlucky cars appear in different positions of the strings, it must be that $\varphi(a_1,a_2,\dots, a_n)$ and $\varphi(b_1,b_2,\dots, b_n)$ are distinct Dyck paths. 
\end{proof}

By the previous lemma, every Dyck path has a preimage. What we will now try to understand is given a Dyck path $D$, how many  frustrated parking functions map to $D$ via $\varphi$. The basic idea is that from our Dyck path, when unlucky cars enter we know which spots are lucky and which are unlucky, and hence how many lucky cars sit between unlucky cars. Note that since we need these lucky cars to sit between them, we know that by the time the $i^{th}$ unlucky car enters, every lucky car parked between the $(i-1)^{th}$ and $i^{th}$ unlucky car must have already entered the lot. This means that in the string $(a_1,a_2,\dots, a_n)$ these lucky cars must appear   in a position before the position of  the $i^{th}$ unlucky car. We also know that the $i^{th}$ unlucky car must be a repeat of a preference of a lucky car that entered before and parked in a spot earlier than where the $i^{th}$ unlucky car parks. Taking advantage of these facts, we can completely determine how many frustrated parking functions map to a fixed Dyck path $D$.  Before we prove this, let us provide a  concrete example to illustrate the idea.

\begin{example}
    Let's consider the Dyck path $D$ found in Figure~\ref{fig:DyckPath}. We see that there are 8 up steps and 8 down steps, telling us that any frustrated parking function that maps to $D$ has length $8$.  We also see that there are 4 ascents and 4 descents implying there are $4-1=3$ unlucky cars. Using  $x_i$ to denote the length of the $i^{th}$ ascent and $y_i$ for the length of the $i^{th}$  descent, we have the following:
    $$
    x_1 = 3 \qquad x_2=2 \qquad x_3=1 \qquad x_4=2
    $$
    and 
    $$
    y_1=3 \qquad y_2= 1 \qquad y_3=2 \qquad y_4=2
    $$
    Lemma~\ref{lem:easyConsequencesOfDyckMap} part (a)  tells us that the unlucky cars are in positions $x_1+1=4, x_1+x_2+1=6,$ and  $x_1+x_2+x_3 + 1=7$.  Using Lemma~\ref{lem:easyConsequencesOfDyckMap} part (c), we see that
    \begin{align*}
        \{a_1,a_2,\dots, a_n\} &= [8]\setminus \{9-3, 9-4,9-6\}\\
        &=[8]\setminus \{6,5,3\}\\
        &=\{1,2,4, 7,8\}
    \end{align*}
    So our string $(a_1,a_2,\dots, a_8)$ consists of the characters $1,2,4,7$ and $8$. Consequently, the unlucky spots are $3,5,6$ and are filled in descending order.\\

    We are going to build our string $(a_1,a_2,\dots, a_8)$ by going through each descent. Descent one corresponds to the first unlucky car entering the lot. As we noted earlier, this car enters in position 4 and parks in spot $6$.  This tells us that the cars that prefer spots 7 and 8 must enter before the fourth car. So in the first 3 characters of $(a_1,a_2,\dots, a_8)$ we must see $7$ and $8$. The number of ways to place these two preferences is $3\cdot 2$.  We also need to decide the preference of the first unlucky car. It must be a preference that has already appeared in the string. Of the three positions before this unlucky car, only one has a preference that is less than $6$. While we don't currently know what preference is in this position of the string, we know its value must be less than $6$. So whatever it ends up being, we can assign the first unlucky car's preference to be this one.  It follows that there is 1 choice for this unlucky car's preference. So for descent one, there are $3\cdot 2\cdot  1$ choices that we could have possibly made.\\

    Now we move to descent two which corresponds to the second unlucky car. Recall that this car enters in position $6$ of $(a_1,a_2,\dots, a_8)$.  Moreover, this unlucky car parks in spot $5$. Since the previous unlucky car parked in spot $6$, there are no lucky cars that park between the first and second unlucky car. So, we can just determine what the preference of the second unlucky car is. There are a total of 5 cars that have entered by the time this unlucky car enters. Of them one is unlucky and two of them have preferences that are too big (namely the ones with preference 7 and 8).  So there are $5-3=2$ possible choices for what the second unlucky car's preference could be.  It follows for descent two, there are $2$ choices that we could have possibly made.\\

    Next, we move to descent three. This corresponds with the third unlucky car which is the seventh car to enter.   As we saw earlier it parks in spot $3$ and so there is one lucky car that parks between the second and third unlucky cars. This lucky car  must have entered before the third unlucky car. In other words, it must have entered in the first 6 positions. Of these 6 positions, $2$ of them are unlucky cars and $2$ of them are lucky cars that we already placed. So there are $6-4=2$ possible choices for the position of this lucky car in  $(a_1,a_2,\dots, a_8)$. Now we need to decide what the third unlucky car's preference is. It must be a preference of a previous car. Of the six positions, two are unlucky and three of them are lucky cars whose preference is too big (namely $4,7,8$). So there are $6-5=1$ possible choices for the preference of this unlucky car. It follows for descent three, there are $2\cdot 1$ choices that we could have possibly made.\\

    Finally we move to descent four. This does not correspond to an unlucky car, but rather to placing the remaining lucky cars that have not already been placed. Note that there are two preferences that have not been placed (namely $1$ and $2$) corresponding to the fact that there are no unlucky cars parked in earlier spots than $1$ and $2$. Currently there are 2 open spots in the string $(a_1,a_2,\dots, a_8)$ and we must place two characters into them. It follows for descent four, there are $2\cdot 1$ choices that we could have possibly made.\\

    Since our choices for each descent are independent, the total number of frustrated parking functions   that map to $D$ via $\varphi$ is 
    $$
    (3\cdot 2\cdot 1) (2)( 2\cdot 1)(2\cdot 1)=48
    $$
 \end{example}

In the previous example, we saw that there were 48 different frustrated parking functions that map to the Dyck path in Figure~\ref{fig:DyckPath}. It would be nice if there was a connection between that Dyck path and the number 48. Fortunately, such an association exists.

\begin{definition}
    Let $D$ be a Dyck path.  The  \textit{height} of a down step in $D$  is the  $y$-value where the down step begins. A \textit{height labeled Dyck path} is a Dyck path together with a labeling of the down steps by positive integers such that every label is at most the height of the down step.
\end{definition}

If we look at the Dyck path in Figure~\ref{fig:DyckPath}, we see that the sequence of heights along the down steps is given by
$$
3,2,1,2,2,1,2,1
$$
It then follows that the number of height labeled Dyck paths of $D$ is $3\cdot 2\cdot 1\cdot 2\cdot 2\cdot 1\cdot 2\cdot 1
=48$, which is exactly the number of frustrated parking functions which map to $D$. As we will soon see, this is no coincidence.\\

In the  proof of the following theorem, we will use the notation $(n)_m$ to denote the falling factorial. So $(n)_m= n(n-1)\cdots (n-(m-1))$.

\begin{proposition}\label{prop:numFPFisSameAsHL}
    Let $D$ be a Dyck path. The number of frustrated parking functions $(a_1,a_2,\dots, a_n)$ such that $\varphi(a_1,a_2,\dots, a_n)=D$ 
    is the number of height labeled Dyck paths with underlying Dyck path $D$.
\end{proposition}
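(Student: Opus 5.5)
We make the counting in the example general. Fix $D\in\mathrm{Dyck}_n$ with ascent lengths $x_1,\dots,x_k$ and descent lengths $y_1,\dots,y_k$, and set $X_i=x_1+\cdots+x_i$ and $Y_i=y_1+\cdots+y_i$.

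By Lemma~\ref{lem:easyConsequencesOfDyckMap}, every preimage $(a_1,\dots,a_n)$ has the following structure:
\begin{itemize}
\item its unlucky cars sit in positions $X_i+1$ for $i=1,\dots,k-1$;
\item the $i^{th}$ unlucky car parks in spot $(n+1)-Y_i$;
\item the lucky preferences form $L=[n]\setminus\{(n+1)-Y_1,\dots,(n+1)-Y_{k-1}\}$.
\end{itemize}
Partition $L$ into blocks. Block $B_i$, for $i\le k-1$, consists of the $y_i-1$ lucky spots strictly between unlucky spots $(n+1)-Y_{i-1}$ and $(n+1)-Y_i$ (with $Y_0=0$). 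The final block $B_k$ consists of the $y_k$ lucky spots below $(n+1)-Y_{k-1}$.

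\textbf{Step 1: characterize preimages.} We claim the preimages of $D$ are exactly the strings built by the following choices, made for $i=1,\dots,k-1$ in turn.
\begin{itemize}
\item Each lucky car with preference in $B_i$ is placed, as the first occurrence of its value, in a distinct position before $X_i+1$ not yet used.
\item The $i^{th}$ unlucky car at position $X_i+1$ is assigned the position of some earlier lucky car whose spot is less than $(n+1)-Y_i$; equivalently, it copies that car's preference.
\end{itemize}
Finally, the values of $B_k$ fill the remaining positions bijectively.

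For necessity, use Proposition~\ref{prop:luckyIsSameAsAppearing}: lucky cars are first occurrences and unlucky cars are repeats. An unlucky car parks at the largest open spot. So when the $i^{th}$ unlucky car enters, every spot above $(n+1)-Y_i$ is full, which forces all of $B_1\cup\cdots\cup B_i$ to have appeared. Its preference must also be a previously seen lucky value below its parking spot.

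For sufficiency, check that these conditions reproduce the parking outcome. The $i^{th}$ unlucky car's preferred spot is occupied. The spots above $(n+1)-Y_i$ are full by the block placement. Spot $(n+1)-Y_i$ itself is open, because no lucky car claims it and earlier unlucky cars took larger spots. Hence the car parks there, and $\varphi$ returns $D$.

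One subtlety must be handled here. A first occurrence is always lucky, and the unlucky car's preference is a spot already filled by the car it copies. So the string is a frustrated parking function with the claimed lucky/unlucky pattern, and the choices determine distinct strings. Distinctness holds because positions together with preference "pointers" determine the string once the lucky values are placed.

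\textbf{Step 2: count the choices.} Each choice count will be matched with a product of heights of down steps of the $i^{th}$ descent.

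When choosing placements for $B_i$, the free positions number
\[
X_i-(i-1)-(|B_1|+\cdots+|B_{i-1}|)=X_i-(i-1)-(Y_{i-1}-(i-1))=X_i-Y_{i-1},
\]
which is exactly the height $h$ at the $i^{th}$ peak. Placing the $y_i-1$ elements of $B_i$ gives $(h)_{y_i-1}=h(h-1)\cdots(h-y_i+2)$ choices, matching the first $y_i-1$ down-step heights.

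For the unlucky car's preference, the admissible earlier lucky cars are those among the $X_i$ earlier positions that are neither unlucky ($i-1$ of them) nor lucky with spot above $(n+1)-Y_i$ ($Y_i-i$ of them). That leaves
\[
X_i-(i-1)-(Y_i-i)=X_i-Y_i+1=h-y_i+1
\]
choices, which is the height of the last down step of the descent.

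For the final descent, $h=X_k-Y_{k-1}=y_k$ free positions receive the $y_k$ values of $B_k$, giving $y_k!$, again the product of that descent's heights.

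\textbf{Step 3: conclude.} Multiplying the counts gives the product of all down-step heights, which is the number of height labelings of $D$. The count of admissible earlier lucky cars is positional and so independent of the actual values of the undetermined preferences. This means the choices are independent and the product count is valid.

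\textbf{Main obstacle.} The delicate step is the sufficiency direction in Step 1. We must show the pointer-based construction always yields a frustrated parking function with image $D$, and verify that the "lucky car below spot $(n+1)-Y_i$" condition is independent of how later blocks get assigned to those positions. The independence follows because any lucky car not in $B_1\cup\cdots\cup B_i$ lies in a later block, and every later block consists of spots below $(n+1)-Y_i$.
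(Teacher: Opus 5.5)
Your proposal is correct and follows essentially the same argument as the paper: at the $i^{th}$ unlucky car you place the $y_i-1$ lucky values of that block into the $h_i=X_i-Y_{i-1}$ open earlier positions, then let the unlucky car point to one of the remaining $h_i-y_i+1$ open positions, giving $(h_i)_{y_i}$, and the last descent contributes $y_k!$. Your explicit necessity/sufficiency check of the preimage characterization is more careful than the paper's sketch, but the decomposition and the counts are the same.
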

\begin{proof}
    Fix a Dyck path $D$ and let's suppose $D$ has $k$ descents. We explain how to construct a frustrated parking function $(a_1,a_2,\dots, a_n)$ with the property that $\varphi(a_1,a_2,\dots, a_n)=D$.  By Lemma~\ref{lem:easyConsequencesOfDyckMap}, $D$ completely determines the number of unlucky cars in $(a_1,a_2,\dots, a_n)$ as well as the positions of the unlucky cars in $(a_1,a_2,\dots, a_n)$. Moreover, we can tell which characters appear in $(a_1,a_2,\dots,a_n)$ and consequently, we can determine which spots are unlucky.  Note that for any frustrated parking function, the unlucky spots get filled in reverse (i.e.~the largest unlucky spot gets filled first and so on).  If a lucky car parks in a spot larger thanwhere an unlucky car parks, by the time this unlucky car enters the lot, that lucky car must have already parked. It follows that this lucky spot's value must appear in the string before the unlucky car's position.  This implies that to find a frustrated parking function which maps to $D$, we can focus on when the unlucky cars enter the lot.\\

Suppose that  we are considering when the $i^{th}$ unlucky car is entering. By Lemma~\ref{lem:easyConsequencesOfDyckMap} part (a), this unlucky car is in position $x_1+x_2+\cdots+x_i+1$ of $(a_1,a_2,\dots, a_n)$.  There are $y_i-1$ lucky cars who's preference needs to be put into the string up to this point. Currently, the number of open positions is the total number of positions before $x_1+x_2+\cdots+x_i+1$, which is $x_1+x_2+\cdots+x_i$ minus the number of positions that have been filled.  The number of the positions that have been filled is $y_1+y_2+\cdots+y_{i-1}$. So the total number of open positions where  we can place these $y_i-1$ preferences is 
$$
h_i:=(x_1-y_1) +(x_2-y_2)+\cdots +(x_{i-1}-y_{i-1}) +x_i
$$
It follows that the number of ways to do this is 
$$
(h_i)_{y_i-1} 
$$

Then we need to determine preferences for that $i^{th}$ unlucky car. We must choose a preference that is smaller than the position that this unlucky car parks in.  These preferences have not yet been put into our string, but we can instead assign a position in the string that is not currently filled and then give the $i^{th}$ unlucky car this preference once it is filled. Currently the number of unfilled positions is 
$$
(x_1-y_1) +(x_2-y_2)+\cdots +(x_{i-1}-y_{i-1}) +x_i-(y_i-1)
$$
So, the total number of ways to fill in the string at this step is 
$$
(h_i)_{y_i-1} ((x_1-y_1) +(x_2-y_2)+\cdots +(x_{i-1}-y_{i-1}) +x_i-(y_i-1)) = (h_i)_{y_i}
$$

Note that $h_i$ is the height of the first down step of the $i^{th}$ descent. It follows that 
$$
(h_i)_{y_i}
$$
is the product of the heights along the $i^{th}$ descent.\\

We now need to place the preferences $1,2,\dots,y_k$ into our string. This corresponds to the $y_k$ lucky cars that have no unlucky cars parked in an earlier spot. Moreover, the number of empty spots is 
$$
h_k :=(x_1-y_1) +(x_2-y_2)+\cdots +(x_{k-1}-y_{k-1}) +x_k
$$
So there are 
$$
(h_k)_{y_k}
$$
ways to do this. Note that this is the product of the heights along the last descent. \\

To finish, let us  note that all of our choices are made independently, so the total number of possible strings that map to $D$ is 
$$
(h_1)_{y_1}(h_2)_{y_2}\cdots(h_k)_{y_k}
$$
which is exactly the product of the heights along the down steps of $D$. The result now follows.
\end{proof}

As shown in~\cite{callan2009combinatorialsurveyidentitiesdouble}, the number of height labeled Dyck paths of semilength $n$ is $(2n-1)!!$ \footnote{In Callan's article, the up steps are labeled as opposed to down steps. Reversing the labeled Dyck paths provides a bijection between the up step labeled and down step labeled variants.}. Hence Proposition~\ref{prop:numFPFisSameAsHL} gives us our main theorem.

\begin{theorem}\label{thm:numTotalFPFsIsDoubleFact}
    The number of frustrated parking functions of length $n$ is $(2n-1)!!$.
\end{theorem}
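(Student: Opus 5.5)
The plan is to partition $\mathrm{FPF}_n$ according to the image under $\varphi$. By the well-definedness lemma, $\varphi$ maps $\mathrm{FPF}_n$ into $\mathrm{Dyck}_n$, so
\[
|\mathrm{FPF}_n| = \sum_{D\in \mathrm{Dyck}_n} |\varphi^{-1}(D)|.
\]
Proposition~\ref{prop:numFPFisSameAsHL} says that $|\varphi^{-1}(D)|$ is the number of height labeled Dyck paths with underlying path $D$, namely the product of the heights of the down steps of $D$. Summing over $D$ then shows that $|\mathrm{FPF}_n|$ equals the total number of height labeled Dyck paths of semilength $n$. Lemma~\ref{lem:DyckPathMapIsSurj} is not logically needed here, since an empty fiber would simply contribute $0$, but it is consistent with every $D$ having a positive product of heights.

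It remains to show that the number of height labeled Dyck paths of semilength $n$ is $(2n-1)!!$. I would cite Callan~\cite{callan2009combinatorialsurveyidentitiesdouble}, with the reversal bijection from the footnote to pass between up-step labels and down-step labels.

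To keep the argument self-contained, I would also sketch a direct proof by induction on $n$, showing that each height labeled path of semilength $n-1$ extends in exactly $2n-1$ ways. The bookkeeping is clearest with a weighted recursion. Let $f(n,h)$ be the number of height labeled prefixes reaching a given height, or equivalently the weighted count of ways to finish from height $h$ with $m$ up steps remaining. In that formulation one checks
\[
\sum_{D} \prod_{\text{down steps}} \mathrm{height} = (2n-1)!!.
\]

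Alternatively, I would use the classical bijection with perfect matchings of $[2n]$. Read the path left to right. Each up step opens an arc, and each down step at height $h$ closes one of the $h$ currently open arcs, with the label choosing which one. This gives a bijection between height labeled Dyck paths of semilength $n$ and perfect matchings of $[2n]$, of which there are $(2n-1)!!$.

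The main obstacle is only in verifying that this matching map is a bijection: the label must be read as an index into the currently open arcs in a fixed order, say ordered by opening time. Given that convention, the inverse map recovers the path from the opener/closer pattern and recovers each label from the rank of the matched opener among the open arcs. With that checked, the theorem follows immediately from Proposition~\ref{prop:numFPFisSameAsHL}.
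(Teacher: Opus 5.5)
Your proof is correct and follows the paper's route: the paper also sums the fiber sizes from Proposition~\ref{prop:numFPFisSameAsHL} over all Dyck paths and then cites Callan for the count $(2n-1)!!$ of height labeled Dyck paths. Your perfect-matching argument, in which a down step at height $h$ closes one of exactly $h$ open arcs, is a valid self-contained replacement for that citation; the weighted-recursion sketch is never actually carried out and can simply be dropped.
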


In fact, we can say more. The map $\varphi: \mathrm{FPF}_n\rightarrow 
\mathrm{Dyck}_n$ maps frustrated parking functions with $n-k$ unlucky cars (and hence $k$ lucky cars) to Dyck paths with $n-k+1$ peaks.  It is known (see for example~\cite{callan2009combinatorialsurveyidentitiesdouble}) the number of height labeled Dyck paths with $n-k+1$ peaks is
$$
\SecondEulerian{n}{n+1-(n-k+1)} = \SecondEulerian{n}{k}
$$
where 
$$
\SecondEulerian{n}{k}
$$
is the second order Eulerian number indexed by $n$ and $k$. These numbers count, among other things, descents in Stirling permutations (as opposed to descents in regular permutations which are counted by the regular Eulerian numbers). See the OEIS entry~\href{https://oeis.org/A008517}{A008517}~(\cite{oeisA002538}) for more information.

\begin{corollary}\label{cor:numWithKDescentsIsSecondEuler}
    The number of frustrated parking functions of length $n$ with $k$ lucky cars    is 
    $$
   \SecondEulerian{n}{k}
    $$
    This is also the number of frustrated parking functions of length $n$ with $k$ lucky spots.
\end{corollary}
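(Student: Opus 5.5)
The plan is to partition $\mathrm{FPF}_n$ by the image under $\varphi$ and sum the fiber sizes given by Proposition~\ref{prop:numFPFisSameAsHL}. First fix $k$. By Lemma~\ref{lem:easyConsequencesOfDyckMap} part (d), a frustrated parking function has exactly $k$ lucky cars if and only if $\varphi$ sends it to a Dyck path with $n-k+1$ peaks. So the set of frustrated parking functions of length $n$ with $k$ lucky cars is the disjoint union, over Dyck paths $D$ of semilength $n$ with $n-k+1$ peaks, of the fibers $\varphi^{-1}(D)$. Since $\varphi$ is surjective (Lemma~\ref{lem:DyckPathMapIsSurj}), every such $D$ contributes.

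Next, by Proposition~\ref{prop:numFPFisSameAsHL}, each fiber $\varphi^{-1}(D)$ has size equal to the number of height labelings of $D$. Summing over all $D$ with $n-k+1$ peaks therefore gives the total number of height labeled Dyck paths of semilength $n$ with $n-k+1$ peaks. I would then invoke the cited result from~\cite{callan2009combinatorialsurveyidentitiesdouble} that this total is $\SecondEulerian{n}{n+1-(n-k+1)}=\SecondEulerian{n}{k}$.

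The one point to check here is the footnote convention. Callan labels up steps rather than down steps, so I would note that reversing the path is a bijection between the two kinds of labeling. Reversal also preserves the number of peaks, so the peak-refined count transfers unchanged.

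For the statement about lucky spots, Proposition~\ref{prop:luckyIsSameAsAppearing} shows that the number of lucky spots equals the number of lucky cars, both being $|\{a_1,\dots,a_n\}|$. Hence the same family is counted and the second claim follows immediately.

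The main obstacle is only the external input. We must confirm that Callan's refinement of $(2n-1)!!$ is by peaks in exactly the indexing claimed, so that $n-k+1$ peaks corresponds to $\SecondEulerian{n}{k}$ rather than an off-by-one variant. I would check this on small cases. For $n=2$, the path $UDUD$ has 2 peaks and heights $1,1$, giving 1 labeling. The path $UUDD$ has 1 peak and heights $2,1$, giving 2 labelings. These match $\SecondEulerian{2}{1}=1$ and $\SecondEulerian{2}{2}=2$.
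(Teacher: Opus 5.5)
Your proposal is correct and follows the paper's own argument. Lemma~\ref{lem:easyConsequencesOfDyckMap}(d) identifies the frustrated parking functions with $k$ lucky cars as the union of the $\varphi$-fibers over Dyck paths with $n-k+1$ peaks. Proposition~\ref{prop:numFPFisSameAsHL} turns fiber sizes into height-labeling counts, and Callan's peak-refined count gives $\SecondEulerian{n}{k}$. The lucky-spot claim holds because lucky cars and lucky spots are always equal in number. Your additional checks are sound and just make explicit what the paper leaves implicit: reversal preserves peaks, so Callan's up-step convention transfers, and the $n=2$ case confirms the indexing.
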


 \begin{remark}\label{rem:avgLuckyCars}
     Bona~\cite{bonaStirlingPerms}   showed that the average number of descents  in a Stirling permutation of length $2n$ is $\dfrac{2n+1}{3}$. Combining this with Corollary~\ref{cor:numWithKDescentsIsSecondEuler}, shows that the average number of lucky cars/spots in a frustrated parking function of length $n$ is also $\dfrac{2n+1}{3}$. 
\end{remark}

\subsection{Counting Lucky Cars and Lucky Spots}\label{sec:countLucky}
Previously we used lucky and unlucky cars to define our map from frustrated parking functions to Dyck paths. Here we will use this map to take a deeper look at lucky cars and lucky spots.\\

Let $T$ be  a subset of $[n]$. We will use the notation $LS^n_T$ to denote the set of frustrated parking functions of length $n$ whose set of lucky spots is $T$. Similarly, we will use $LC^n_T$ to denote the set of frustrated parking functions of length $n$ whose set of positions of lucky cars is $T$. For example, when $n=3$, we have that 
$$
LS^3_{\{1,2\}} =\{(1,1,2),(1,2,1), (1,2,2), ( 2,1,1), (2,1,2), (2,2,1)\}
$$
and 
$$
LC^3_{\{1,2\}} =\{(1,2,1), (1,2,2), (1,3,1), (2,1,1), (2,1,2), (3,1,1)\}
$$

The reader may have noticed that while the sets are not the same, they do have the same size. Clearly the number of frustrated parking functions that have $k$ lucky cars and $k$ lucky spots are the same. However, we can say something stronger. We will show that for all $n$ and all $T\subseteq [n]$
$$
|LS^n_T| = |LC^n_T|
$$
Before we do this, let us note that the analogue for classical parking functions is not true. For example, for $n=3$, there are five classical parking functions with set of lucky spots $\{1,2\}$ namely 
$$
(1,2,1), (1,2,2), (2,1,1), (2,1,2), (2,2,1)
$$
and six with set of  positions of lucky cars $\{1,2\}$ namely
$$
(1,2,1), (1,2,2), (1,3,1), (2,1,1), (2,1,2), (3,1,1)
$$

\begin{lemma}\label{lem:luckyCarsandSpotsInDyckPath}
    Let $(a_1,a_2,\dots, a_n)$ be a frustrated parking function and let $D=\varphi(a_1,a_2,\dots, a_n)$. Then we have the following.
    \begin{enumerate}
        \item[(a)] The $i^{th}$ car is lucky if and only if the $i^{th}$ up step of $D$ is not part of a valley.
    \item [(b)]  The $i^{th}$ spot is lucky if and only if the $(n-i+1)^{th}$ down step of $D$ is not part of a valley. 
    \end{enumerate}
\end{lemma}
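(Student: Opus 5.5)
Both parts come from reading off, step by step, how the construction in Definition~\ref{def:FPFtoDyck} builds $D$. First we need to pin down what ``part of a valley'' means. A valley is a $DU$ pair: a down step immediately followed by an up step. We say an up step is part of a valley if it is the $U$ of such a pair, and a down step is part of a valley if it is the $D$ of such a pair.

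For (a), note that the construction produces the up steps of $D$ in bijection with the cars, in order of entry. Each car contributes exactly one up step, so the $i^{th}$ up step belongs to car $i$.
\begin{itemize}
\item If car $i$ is lucky, its up step is appended right after whatever came before. That is either the previous car's up step or the trailing $U$ of an unlucky car's $D\cdots DU$ block. In either case it is preceded by an up step, or it is the first step of the path, so it is not part of a valley.
\item If car $i$ is unlucky, the construction appends $s-t\geq 1$ down steps and then its up step. Here $s-t\ge 1$ because unlucky spots are filled in decreasing order and $s=n+1$ for the first unlucky car. So this up step is immediately preceded by a down step and is part of a valley.
\end{itemize}
This gives (a) directly.

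For (b), we match down steps with spots, read from right to left. The plan is to show that the down steps of $D$, taken in order, correspond to spots $n,n-1,\dots,1$. The $j^{th}$ down step then corresponds to spot $n-j+1$, and it will be part of a valley exactly when that spot is unlucky. Arguing as in the proof of Lemma~\ref{lem:easyConsequencesOfDyckMap}(c), the block appended for an unlucky car parking at $t$ (previous unlucky spot $s$) has $s-t$ down steps, assigned to spots $s-1,s-2,\dots,t$ in that order.
\begin{itemize}
\item Spots $s-1,\dots,t+1$ are strictly between consecutive unlucky spots, so they are lucky.
\item The last down step of the block corresponds to the unlucky spot $t$, and it is immediately followed by the up step of that unlucky car. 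It is therefore part of a valley.
\item The earlier down steps of the block are followed by down steps, so they are not part of a valley.
\end{itemize}
The final descent covers the spots below the smallest unlucky spot, which are all lucky. It ends the path, so none of its steps is part of a valley. Since consecutive blocks cover the consecutive ranges $s-1,\dots,t$ and the final descent covers the remaining range down to $1$, the spots are assigned bijectively in decreasing order.

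The main obstacle is making this bookkeeping airtight. We need the down steps to be partitioned into consecutive runs with exactly the right lengths, and we need every valley of $D$ to arise only at the end of an unlucky car's down block. The second point holds because down steps are only ever appended in those blocks or in the final descent, and only the block's last down step is followed by a $U$. With both points established, (b) follows.
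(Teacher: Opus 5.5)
Your proposal is correct and follows the same route as the paper. Both arguments read the up steps as the cars in order of entry, so that an unlucky car's up step is exactly one preceded by its block of $s-t\ge 1$ down steps, and read the down steps as spots $n,n-1,\dots,1$, with only the last down step of each non-final descent landing on an unlucky spot. Your version simply makes the paper's two-line bookkeeping explicit. The one point you leave implicit is that the final descent has exactly $t-1$ steps, where $t$ is the smallest unlucky spot; this holds because the blocks contribute $n+1-t$ of the $n$ down steps.
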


\begin{proof}
    Part (a) follows from the fact that each lucky car provides an up step to $D$. Moreover, all up steps except those immediately following a descent come from lucky cars. As for part (b), the lucky spots are counted backwards from $n$ for each down step except for the last down step of the descent i.e.~a down step that is part of a valley.
\end{proof}

\begin{theorem}\label{thm:numLuckCarsSameAsNumLuckySpotsForFixedSet}
 Let $n\geq 1$. If $T\subseteq [n]$, then 
 $$
 |LS^n_T|=|LC^n_T|
 $$
 In other words, for  fixed $n$, the number of frustrated parking functions of length $n$ with set of lucky spots $T$ is the same as those whose set of positions of lucky cars is $T$.
\end{theorem}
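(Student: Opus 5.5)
The plan is to transfer the statement to height labeled Dyck paths and then use a reversal symmetry. By Proposition~\ref{prop:numFPFisSameAsHL}, for any set $\mathcal{S}$ of Dyck paths, the number of frustrated parking functions $P$ with $\varphi(P)\in\mathcal{S}$ is
\[
\sum_{D\in\mathcal{S}} w(D),
\]
where $w(D)$ is the product of the heights of the down steps of $D$. By Lemma~\ref{lem:luckyCarsandSpotsInDyckPath}, membership in $LC^n_T$ depends only on $\varphi(P)$. Namely, $P\in LC^n_T$ exactly when the set of indices $i$ such that the $i^{th}$ up step of $D$ is not immediately preceded by a down step equals $T$. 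Similarly, $P\in LS^n_T$ exactly when the set of $i$ such that the $(n-i+1)^{th}$ down step of $D$ is not immediately followed by an up step equals $T$. So both counts are weighted sums over suitable sets of Dyck paths, and it suffices to find a bijection between these two sets of paths that preserves $w$.

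The natural candidate is reversal: $\rho(D)$ reads $D$ backwards and swaps $U$ and $D$, which reflects the path left to right. Under $\rho$, the $(n-i+1)^{th}$ down step of $D$ becomes the $i^{th}$ up step of $\rho(D)$. A down step followed by an up step (a valley in $D$) becomes an up step preceded by a down step, which is again a valley. Hence the down-step valley condition in Lemma~\ref{lem:luckyCarsandSpotsInDyckPath}(b) for $D$ turns into the up-step condition in part (a) for $\rho(D)$, with the same index set $T$. So $\rho$ maps the paths counted by $LS^n_T$ bijectively onto those counted by $LC^n_T$.

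The remaining issue, and the main obstacle, is that $\rho$ does not preserve $w$. After reflection, down steps become up steps, and $w(\rho(D))$ is the product of heights along down steps of $\rho(D)$, which are the up steps of $D$. So I must show that for the relevant paths the two weightings agree in aggregate, or else replace $\rho$ by a weight-preserving bijection.

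The first thing I would try is to avoid a pathwise identity and instead group paths by their valley data. Fix the set of "non-valley" positions. Since valleys correspond to unlucky cars, this means fixing the positions of the valleys as up-step indices, equivalently the values $x_1+\cdots+x_i$ from Lemma~\ref{lem:easyConsequencesOfDyckMap}(a). I would then compute the weighted count directly as in the proof of Proposition~\ref{prop:numFPFisSameAsHL}. For a fixed set of unlucky car positions $c_1<\dots<c_m$, I would sum over how the descents split, and hope the total factors as a product such as
\[
\prod_{j}(\text{available slots at step } j).
\]
I would do the same for fixed unlucky spots and compare the two products. Concretely, counting by lucky spots is a "last-step" (spot order) recursion, while counting by lucky cars is a "first-step" (entry order) recursion. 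Both should give a product of the form $\prod_{i\notin T}(\text{number of lucky elements of index} <i)$ times a factor for the lucky ones. I would verify this on $n=3$, where both sides equal $6$ for $T=\{1,2\}$. Establishing these two product formulas, and seeing that they coincide, is the heart of the proof.
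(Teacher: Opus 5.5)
Your reduction matches the paper's. By Proposition~\ref{prop:numFPFisSameAsHL}, both sides are sums of $w(D)$ over sets of Dyck paths, and those sets are determined by Lemma~\ref{lem:luckyCarsandSpotsInDyckPath}. You also correctly check that the reflection $\rho$ carries the paths with lucky-spot set $T$ onto the paths with lucky-car-position set $T$.

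The gap is the next step. You assert that $\rho$ does not preserve $w$, but it does, path by path, and this one observation is all that is missing. For $h\ge 1$, let $u_h$ be the number of up steps of $D$ from height $h-1$ to $h$, and $d_h$ the number of down steps from $h$ to $h-1$. Since $D$ starts and ends at height $0$, its crossings between heights $h-1$ and $h$ alternate between upward and downward, so $u_h=d_h$. An up step of $D$ ending at height $h$ becomes a down step of $\rho(D)$ beginning at height $h$. Hence
\[
w(\rho(D))=\prod_{h\ge 1}h^{u_h}=\prod_{h\ge 1}h^{d_h}=w(D).
\]
For the path in Figure~\ref{fig:DyckPath}, the up steps end at heights $1,2,3,1,2,2,1,2$, the same multiset as the down-step heights $3,2,1,2,2,1,2,1$. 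Therefore $|\varphi^{-1}(D)|=|\varphi^{-1}(\rho(D))|$ for every $D$. Summing over the paths with lucky-spot set $T$ finishes the proof; the paper calls this weight equality ``straightforward'' and argues exactly this way.

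Because you missed this, the proposal ends on an unproven plan, and that plan would not work as stated. For a fixed $T$, the count need not factor as a product over the indices $i\notin T$. For example, $|LS^4_{\{1,2\}}|=|LC^4_{\{1,2\}}|=2!\,S(4,2)=14$. This number is not divisible by the factor $2\cdot 2$ that your proposed formula assigns to the unlucky indices $3,4$. The reason is that it arises as $8+4+2$, the weights of the three different paths $UUDUDUDD$, $UDUUDUDD$, $UDUDUUDD$ that share the required valley pattern. Summing such unequal weights has no reason to factor, so you should drop that route and use the weight-invariance argument above.
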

\begin{proof}
    Given a Dyck path $D$, let $D^{rev}$ be the Dyck path obtained by reversing the string $D$ and replacing each up step with a down step and vice versa. Clearly this map is a bijection on Dyck paths. Moreover, it is straightforward to see that the number of height labeled Dyck paths with underlying Dyck path $D$ is the same as the number of height labeled Dyck paths with underlying Dyck path $D^{rev}$. Thus, Proposition~\ref{prop:numFPFisSameAsHL} implies that 
    $$
    |\varphi^{-1}(D)| = |\varphi^{-1}(D^{rev})| 
    $$  
    So, for each Dyck path $D$, there is a bijection between $\varphi^{-1}(D)$ and $\varphi^{-1}(D^{rev})$. When we compare $D$ to $D^{rev}$, we have that the $i^{th}$ up step of $D$ is the $(n-i+1)^{th}$ down step of $D^{rev}$ and vice versa. So by Lemma~\ref{lem:luckyCarsandSpotsInDyckPath}, we get a bijection that swaps the set of positions of lucky cars with the set of lucky spots. The result now follows.
\end{proof}

 We now consider the case when  only the first $k$ cars/spots are lucky.  In the next proposition and throughout the remainder of the article, we use $S(n,k)$ to denote the Stirling number of the second kind indexed by $n$ and $k$. So $S(n,k)$ is the  number of set partitions of $[n]$ with $k$ blocks. Note that $k!S(n,k)$ is the number of surjections from $[n]$ to $[k]$.

\begin{proposition}\label{prop:numFirstSpotsLuckyisk!S(n,k)}
    The number of frustrated parking functions of length $n$ whose set of positions of lucky  cars as well as the set of frustrated parking functions whose lucky spots is $[k]$ is $k!S(n,k)$. That is 
    $$
    |LC^n_{[k]}| = |LS^n_{[k]}| =k!S(n,k)
    $$
\end{proposition}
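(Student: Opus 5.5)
By Theorem~\ref{thm:numLuckCarsSameAsNumLuckySpotsForFixedSet} we already know $|LC^n_{[k]}| = |LS^n_{[k]}|$. It therefore suffices to show $|LC^n_{[k]}| = k!S(n,k)$, and I would do this directly from the parking rule.

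Let $(a_1,\dots,a_n)\in LC^n_{[k]}$. By Proposition~\ref{prop:luckyIsSameAsAppearing}, car $j$ is lucky exactly when $a_j$ is a first occurrence. So the set of lucky car positions being $[k]$ means two things:
\begin{itemize}
\item[(i)] $a_1,\dots,a_k$ are distinct;
\item[(ii)] each $a_j$ with $j>k$ lies in $A:=\{a_1,\dots,a_k\}$.
\end{itemize}
In particular $\{a_1,\dots,a_n\}=A$ has size $k$, and by Proposition~\ref{prop:luckyIsSameAsAppearing} again $A$ is the set of lucky spots.

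The key claim is that $A=[k]$. Suppose $m\notin A$ with $m\le k$. After the first $k$ cars park (all lucky, in spots $A$), the remaining $n-k$ cars are unlucky. Each of them parks in the largest empty spot, so they fill the empty spots in decreasing order. The last one to park takes the smallest empty spot, which is at most $m$. That car's preference lies in $A$ and must be no larger than the spot it takes. It is also distinct from that spot, since the spot is not in $A$. The difficulty is getting a contradiction out of this. A cleaner route is to count: the empty spots after the first $k$ cars are exactly $[n]\setminus A$, and these are all filled. For the spot $s=\min([n]\setminus A)$ to be filled, its car's preference $a\in A$ must satisfy $a<s$, so $A$ contains an element below $s$. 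If $s=1$ this is impossible. More precisely, the argument should be that $1\in A$ (spot $1$ can only be filled by a car preferring it) and, more generally, that $A$ must be an initial segment $[k]$. I expect this to be the main obstacle, and I would get it from Lemma~\ref{lem:easyConsequencesOfDyckMap}(c). The lucky cars occupy the first $k$ up steps, so $D$ has the form $U^kD^{y_1}U D^{y_2}U\cdots$. After the first peak the path never rises by more than one before a down step, which forces $y_2=\cdots=y_{n-k}=1$ and $y_1\ge 1$. Then $y_1+\cdots+y_{n-k}$ consists of consecutive values, so the unlucky spots are $n+1-y_1,\, n-y_1,\,\dots$, a terminal interval. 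Since the final descent has length $y_{last}$ and the total number of down steps is $n$, the unlucky spots are exactly $\{k+1,\dots,n\}$. Hence $A=[k]$.

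Conversely, take any sequence whose entries form exactly the set $[k]$ and whose first $k$ entries are distinct. By Corollary~\ref{cor:ifLuckyCarIsIntervalItWorks} this is a frustrated parking function. By Proposition~\ref{prop:luckyIsSameAsAppearing} its lucky cars are exactly positions $1,\dots,k$. So $LC^n_{[k]}$ is the set of surjections $[n]\to[k]$ that are injective on $[k]$.

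Finally, I would count these surjections. There are $k!$ choices for $(a_1,\dots,a_k)$ as a permutation of $[k]$, and each of the remaining $n-k$ entries is arbitrary in $[k]$, giving $k!\,k^{n-k}$. But that is not $k!S(n,k)$, so the condition I have derived must be too weak. I would return to the Dyck path picture. Having all lucky cars first means $D=U^kD^{y_1}UD\,U\cdots$, and the heights along the descents should reproduce the surjection count via the recursion $S(n,k)=kS(n-1,k)+S(n-1,k-1)$. The plan is then to set up $f(n,k)=|LS^n_{[k]}|$ and verify this recursion by considering whether spot $n$'s car is the last unlucky car or not, carefully rechecking which sequences genuinely park. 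The main obstacle is correctly characterizing the preimage; I would check small cases such as $n=3,k=2$ (expected count $6$) against the explicit lists of $LC^3_{\{1,2\}}$ and $LS^3_{\{1,2\}}$ given above before committing.
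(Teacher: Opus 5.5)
\textbf{There is a genuine gap: your key claim that $A=[k]$ is false.} Your reduction to $|LC^n_{[k]}|$ via Theorem~\ref{thm:numLuckCarsSameAsNumLuckySpotsForFixedSet} is fine, and (i), (ii) and $1\in A$ are correct. But $A=\{a_1,\dots,a_k\}$ need not equal $[k]$ for elements of $LC^n_{[k]}$. The paper's own list contains $(1,3,1)\in LC^3_{\{1,2\}}$: the third car finds spot $1$ occupied and parks in spot $2$, so $A=\{1,3\}$.

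Your Dyck path argument fails at the same point. Having all lucky cars first forces the \emph{ascents} after the first to have length one ($x_2=\cdots=x_{n-k+1}=1$), but it puts no such restriction on the descents. Indeed $\varphi(1,3,1)=UUDDUD$ has $y_1=2$. Likewise $(3,1,3,1)\in LC^4_{\{1,2\}}$ has $\varphi(3,1,3,1)=UUDUDDUD$, with $y_2=2$ and unlucky spots $\{4,2\}$. So the family you end up with (first $k$ entries a permutation of $[k]$, the rest arbitrary in $[k]$) is a proper subset of $LC^n_{[k]}$. Your condition is too \emph{strong}, not too weak, which is why you get the undercount $k!\,k^{n-k}$ (that is, $4$ instead of $6$ for $n=3$, $k=2$). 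The recursion at the end is only a plan, so as written the proposal does not prove the statement.

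\textbf{The fix: count lucky spots instead.} You applied the value-set characterization to the wrong set. Since you already have $|LC^n_{[k]}|=|LS^n_{[k]}|$, work with $LS^n_{[k]}$. By Proposition~\ref{prop:luckyIsSameAsAppearing}, the set of lucky spots \emph{is} $\{a_1,\dots,a_n\}$, so $LS^n_{[k]}$ is exactly the set of frustrated parking functions with value set $[k]$. By Corollary~\ref{cor:ifLuckyCarIsIntervalItWorks}, every word of length $n$ with value set $[k]$ is a frustrated parking function. Hence $LS^n_{[k]}$ is the set of surjections $[n]\to[k]$, of size $k!S(n,k)$. This is the paper's proof, and your ``converse'' paragraph already contains both ingredients.

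If you want to count $LC^n_{[k]}$ directly, use the correct description. Take $(a_1,\dots,a_k)$ to be any ordering of a $k$-set $A$. Let $m_j$ be the $j$th largest element of $[n]\setminus A$; the $j$th unlucky car takes $m_j$ and may prefer any element of $\{a\in A : a<m_j\}$. Summing $k!\prod_j |\{a\in A: a<m_j\}|$ over $A$ gives $k!$ times the complete homogeneous symmetric polynomial $h_{n-k}(1,2,\dots,k)$, which equals $k!S(n,k)$. That route works but is considerably more effort than the lucky-spot argument.
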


\begin{proof}

    We prove the result for the set of lucky spots $[k]$. The result for the set of positions of lucky cars will then follow from Theorem~\ref{thm:numLuckCarsSameAsNumLuckySpotsForFixedSet}.\\

    By Proposition~\ref{prop:luckyIsSameAsAppearing},  $LS^n_{[k]}$ is  the set of frustrated parking functions with preference  set $[k]$. By Corollary~\ref{cor:ifLuckyCarIsIntervalItWorks} any string of whose characters set is $[k]$ is a frustrated parking function.  Combining these facts shows that  $LS^n_{[k]}$  is the set of strings of length $n$ whose character set is $[k]$ such that each element of $[k]$ is used at least once. This set of strings is in bijection with the set of surjections from $[n]$ to $[k]$ from which the result now follows.
\end{proof}

The previous proposition implies that  the  number of frustrated parking functions such that once a car/spot is unlucky, all later cars/spots are unlucky is given by
$$
\sum_{k=1}^n k!S(n,k) 
$$
 These numbers are known as the Fubini numbers (and also as the ordered Bell numbers). They appear in the OEIS as A000670.

\begin{corollary}\label{cor:numLuckyThenUnluckyIsFubini}
The number of frustrated parking functions of length $n$ where  once a spot is unlucky all later spots are unlucky is     the $n^{th}$ Fubini number. Moreover, 
the number of frustrated parking functions of length $n$ where once a car is unlucky, the remaining cars are unlucky is the $n^{th}$ Fubini number.
\end{corollary}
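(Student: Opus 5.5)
The plan is to first turn the condition ``once a spot is unlucky, all later spots are unlucky'' into a statement about the set of lucky spots. Spot $1$ is always lucky: the car parked there is parked in the smallest possible spot, so it parked at its preference, since an unlucky car parks at the largest open spot, which is strictly beyond its filled preference and hence larger than $1$. So the lucky spots form a nonempty initial segment $[k]$ for some $1\le k\le n$. Conversely, if the set of lucky spots is $[k]$, then the condition holds. The frustrated parking functions of length $n$ in question are therefore exactly the disjoint union
$$
\bigcup_{k=1}^n LS^n_{[k]},
$$
and the union is disjoint because each frustrated parking function has exactly one set of lucky spots.

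Proposition~\ref{prop:numFirstSpotsLuckyisk!S(n,k)} gives $|LS^n_{[k]}|=k!S(n,k)$, so the count is $\sum_{k=1}^n k!S(n,k)$. This is the standard expression for the $n^{th}$ Fubini number, since it counts ordered set partitions of $[n]$ by their number of blocks.

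For cars the argument is identical. Car $1$ is always lucky, because it enters an empty lot. So the condition ``once a car is unlucky, the remaining cars are unlucky'' says that the set of positions of lucky cars is $[k]$ for some $k\ge 1$. The relevant set is then $\bigcup_{k=1}^n LC^n_{[k]}$, again a disjoint union. Applying $|LC^n_{[k]}|=k!S(n,k)$, either from Proposition~\ref{prop:numFirstSpotsLuckyisk!S(n,k)} or via Theorem~\ref{thm:numLuckCarsSameAsNumLuckySpotsForFixedSet}, gives the same sum.

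The only step needing care is the first: checking that spot $1$ and car $1$ are always lucky, so that $k\ge1$ and the empty set never occurs. The rest is a direct sum over $k$.
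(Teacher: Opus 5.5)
Your proposal is correct and matches the paper's argument. The paper also sums $|LS^n_{[k]}|=|LC^n_{[k]}|=k!S(n,k)$ from Proposition~\ref{prop:numFirstSpotsLuckyisk!S(n,k)} over $1\le k\le n$. Your explicit check that spot $1$ and car $1$ are always lucky, which rules out $k=0$, fills in a step the paper leaves implicit.
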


 \begin{remark}

A \textit{unit interval parking function} is a sequence $(a_1,a_2,\dots, a_n)$ in which, under the classical parking rule, car $i$ parks in spot $a_i$ or $a_i+1$. Originally shown in~\cite{Hadaway2021} (and  later  in ~\cite{unitIntRFubini} and~\cite{unitIntPFs}), the number of unit interval parking functions of length $n$ are counted by the $n^{th}$ Fubini number.  Hence frustrated parking functions where once a car/spot is unlucky all remaining cars/spots are unlucky are in bijection with unit interval parking functions. We should note that while all these sets are subsets of parking functions, they are not the same. For example, $(1,1,3)$ is a unit interval parking function, but it is not  a frustrated parking function.   The set of frustrated parking functions where once a spot is unlucky all later spots are unlucky is given by the disjoint union
$$
\bigcup_{k=1}^n LS^n_{[k]}
$$
As was explained in the proof of Proposition~\ref{prop:numFirstSpotsLuckyisk!S(n,k)}, each $LS^n_{[k]}$ is the set of strings of length $n$ where every character in $[k]$ appears at least once.  Thus, 
$$
\bigcup_{k=1}^n LS^n_{[k]}
$$
is the set of strings of length $n$ over $[n]$ such that if $j$ appears in the string, and $i\leq j$, then $i$ appears. Such strings are often referred to as \textit{Cayley permutations}. In~\cite[Lemma 4.2]{invInParkingFunc}, an explicit bijection is given between Cayley permutations and unit interval parking functions. This gives an explicit  bijection between unit interval parking functions and frustrated parking functions of length $n$ where once a spot is unlucky, all later spots are unlucky.  Let us also note that by Corollary~\ref{cor:numLuckyThenUnluckyIsFubini}, there are the same number of unit interval parking functions and  frustrated parking functions where once a car is unlucky all remaining cars are unlucky. However, for $n\geq 3$, this set of frustrated parking functions are not Cayley permutations. It would be interesting to find an explicit bijection between these frustrated parking functions and unit interval parking functions.
 \end{remark}

 Using the definition of $\varphi$, we can see that if $(a_1,a_2,\dots, a_n)\in LC^n_{[k]}$ for some $k$, then $\varphi(a_1,a_2,\dots, a_n)$ cannot contain the consecutive subsequence $DUU$. This is because $DUU$ in the Dyck path is the exact situation when an unlucky car parked, and then a lucky car entered.  From this we can use our map to prove the following.
\begin{corollary}
    The number of Dyck paths that do not contain the consecutive subsequence $DUU$  is $2^{n-1}$ and the number of height labeled Dyck paths whose underlying Dyck path does not contain the consecutive subsequence $DUU$  is the $n^{th}$ Fubini number. The same result holds if we replace $DUU$ with $UDD$.
\end{corollary}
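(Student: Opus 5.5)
The plan is to show that $\varphi$ restricts to a surjection from $\bigcup_{k=1}^n LC^n_{[k]}$ onto the set of Dyck paths of semilength $n$ avoiding $DUU$, and then to count on both sides using Proposition~\ref{prop:numFPFisSameAsHL}.

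\textbf{Step 1: pin down the image.} By Lemma~\ref{lem:luckyCarsandSpotsInDyckPath}(a), car $i$ is unlucky exactly when the $i^{th}$ up step of $D$ immediately follows a down step, that is, when it is the $U$ of a valley $DU$. So the set of lucky positions is an initial segment $[k]$ exactly when no valley up step is followed by a non-valley up step. Once a valley has occurred, the next up step (if any) either follows a $D$ or follows the valley's $U$. Thus the condition is equivalent to $D$ containing no factor $DUU$. The argument is reversible, so for any Dyck path $D$:
\begin{itemize}
\item if $D$ avoids $DUU$, then every $a\in\varphi^{-1}(D)$ lies in $\bigcup_k LC^n_{[k]}$;
\item otherwise no element of $\varphi^{-1}(D)$ does.
\end{itemize}

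\textbf{Step 2: count the paths.} A Dyck path avoiding $DUU$ has the form $U^{x_1}D^{y_1}(UD^{y_2})\cdots(UD^{y_m})$. Every ascent after the first has length one, and ascents and descents alternate, so it has exactly $m$ descents. Since the total number of up steps is $n$, we get $x_1=n-m+1$. The path is then determined by a composition $(y_1,\dots,y_m)$ of $n$, and the Dyck condition is automatic: after the $j^{th}$ descent the height is
$$(n-m+1)+(j-1)-(y_1+\cdots+y_j)=(n-m+j)-(y_1+\cdots+y_j)\ge 0,$$
because the remaining parts $y_{j+1},\dots,y_m$ sum to at least $m-j$. Hence compositions of $n$ biject with these paths, and there are $2^{n-1}$ of them.

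\textbf{Step 3: count the height labelings.} By Proposition~\ref{prop:numFPFisSameAsHL}, the number of height labelings of $D$ equals $|\varphi^{-1}(D)|$. Summing over $DUU$-avoiding $D$ and using Step 1 gives
$$\Bigl|\bigcup_k LC^n_{[k]}\Bigr|=\sum_{k=1}^n k!S(n,k),$$
which is the Fubini number by Corollary~\ref{cor:numLuckyThenUnluckyIsFubini}.

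\textbf{Step 4: the $UDD$ case.} Apply the reversal map $D\mapsto D^{rev}$ from the proof of Theorem~\ref{thm:numLuckCarsSameAsNumLuckySpotsForFixedSet}. It is a bijection on Dyck paths and sends $DUU$-avoiding paths to $UDD$-avoiding ones, since reversing and swapping letters turns the factor $DUU$ into $DDU$ read backwards, i.e.\ $UDD$. That proof also notes that it preserves the number of height labelings, so both counts carry over.

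The main care point is the equivalence in Step 1. It must hold in both directions, so that the fibre of every $DUU$-avoiding path lies entirely inside $\bigcup_k LC^n_{[k]}$ and no other fibre meets it. Only then does the sum over fibres count exactly this union.
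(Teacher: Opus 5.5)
Your Steps 1--3 are correct. They differ from the paper mainly in how you obtain $2^{n-1}$.

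\textbf{How the routes differ.} The paper restricts $\varphi$ to weakly decreasing frustrated parking functions. On these $\varphi$ is a bijection onto $\mathrm{Dyck}_n$, which is what the proof of Lemma~\ref{lem:DyckPathMapIsSurj} actually shows. The paper then counts those whose lucky positions form an initial segment: only the value $1$ may repeat, giving $\sum_k\binom{n-1}{k}$.

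You count the paths directly. Avoiding $DUU$ forces every ascent after the first to have length one, so the path is encoded by its composition of descent lengths, and you check the Dyck condition by hand. Your route is more elementary and does not depend on an injectivity argument buried inside a surjectivity proof. The paper's route keeps everything inside the parking-function picture.

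You also prove both directions of ``lucky positions are $[k]$ $\iff$ $D$ avoids $DUU$'' from Lemma~\ref{lem:luckyCarsandSpotsInDyckPath}(a). The paper only asserts this, yet both arguments need it, since the fibre sum in Step 3 must cover exactly $\bigcup_k LC^n_{[k]}$.

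\textbf{Step 4 fails.} Reversing $DUU$ gives $UUD$, and swapping letters then gives $DDU$. That is the factor that appears in $D^{rev}$ read left to right. Your extra ``read backwards'' reverses a second time, which amounts to swapping letters only. So $D\mapsto D^{rev}$ carries $DUU$-avoiding paths to $DDU$-avoiding paths, not $UDD$-avoiding ones. For example, $UUDD$ is fixed by the map, avoids $DUU$, and contains $UDD$.

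The $UDD$ claim is in fact false. Every descent is immediately preceded by an up step, so a Dyck path avoids $UDD$ exactly when all its descents have length $1$. That forces the path to be $(UD)^n$. For $n\ge 2$ this gives one path with one height labeling, not $2^{n-1}$ paths with a Fubini number of labelings; for $n=2$ it is $1$ and $1$ versus $2$ and $3$.

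The paper's own proof asserts the same nonexistent bijection, so the error is in the statement itself. Its last sentence should read $DDU$ instead of $UDD$. With that change, your Step 4 works verbatim, using the fact (cited from the proof of Theorem~\ref{thm:numLuckCarsSameAsNumLuckySpotsForFixedSet}) that reversal preserves the number of height labelings.
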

\begin{proof}
First note that the claim in the last sentence follows from the previous claim since there is a bijection between (height labeled) Dyck paths that do not have $DUU$ and those that do not have $UDD$.\\

    By the proof of Lemma~\ref{lem:DyckPathMapIsSurj}, restricting the map $\varphi:\mathrm{FPF}_n\rightarrow \mathrm{Dyck}_n$ to weakly decreasing frustrated parking functions gives a bijection. As mentioned earlier, the frustrated parking functions where once a car is unlucky, all remaining cars are unlucky map to Dyck paths that do not contain the sequence $DUU$. So, it suffices to find the number of weakly decreasing frustrated parking functions that once a car is unlucky all subsequent cars are unlucky. Since spot 1 is always lucky, and the sequence is weakly decreasing, the only repeated character in the string can be 1. So, we just need to decide what other characters appear in the string and then write them in decreasing order, followed by a string of 1's. If we want $k$ non-1 characters, there are
    $$
    {n-1\choose k}
    $$
    choices. Since $k$ can range from 0 to $n-1$, the total is
    $$
    \sum_{k=0}^{n-1}     {n-1\choose k}
 = 2^{n-1}
    $$
Hence the total number of Dyck paths that do not contain $DUU$ is $2^{n-1}$. To finish the proof, apply the result in Corollary~\ref{cor:numLuckyThenUnluckyIsFubini}.
\end{proof}

\section{A Generalization Using Graphs}\label{sec:graphGeneralization}
Though we have framed frustrated parking functions as a situation where the drivers get frustrated and drive as far from their preferred spot as possible, our initial framework came from graphs. Because this viewpoint allows for further generalization, we wish to share this interpretation here. \\

Let $G$ be a graph with vertex set $[n]$, and suppose that cars may travel along its edges and park at its vertices. We have $n$ cars, each with a preferred parking vertex. The cars arrive one at a time. A car starts at its preferred vertex. If the vertex is unoccupied, it parks there. Otherwise, from its current vertex $v$, it looks among the neighbors of $v$ whose labels are larger than $v$. If one or more are empty, the car moves to the largest empty neighbor and parks there. If none are empty, the car drives to the smallest labeled neighbor of $v$ whose label is larger than $v$ and repeats the same procedure. This process continues until the car either parks or reaches a vertex with no larger neighbors, at which point it is unable to park. We say that a sequence  $(a_1,a_2,\dots, a_n)$ of preferences is a  \textit{largest neighbor parking function of $G$} if all cars with these preferences can park on $G$ according to our rule. \\

\begin{figure}
    \centering
\begin{tikzpicture}[
    vertex/.style={
        circle,
        draw,
        fill=white,
        minimum size=7mm,
        inner sep=0pt
    }
]

\node[vertex, label=above:$1$] (1) at (0,1.5) {};
\node[vertex, label=below:$2$] (2) at (0,0) {};
\node[vertex, label=below:$3$] (3) at (2,0) {};
\node[vertex, label=above:$4$] (4) at (2,1.5) {};
\node[vertex, label=right:$5$] (5) at (3.8,1.5) {};

\draw (1)--(2);
\draw (2)--(3);
\draw (3)--(4);
\draw (4)--(1);
\draw (2)--(4);
\draw (4)--(5);

\begin{scope}[shift={(5.5,0)}]
\node[vertex, label=above:$1$] (1) at (0,1.5) {};
\node[vertex, label=below:$2$] (2) at (0,0) {$C_1$};
\node[vertex, label=below:$3$] (3) at (2,0) {};
\node[vertex, label=above:$4$] (4) at (2,1.5) {};
\node[vertex, label=right:$5$] (5) at (3.8,1.5) {};

\draw (1)--(2);
\draw (2)--(3);
\draw (3)--(4);
\draw (4)--(1);
\draw (2)--(4);
\draw (4)--(5);

\end{scope}

\begin{scope}[shift={(11,0)}]
\node[vertex, label=above:$1$] (1) at (0,1.5) {};
\node[vertex, label=below:$2$] (2) at (0,0) {$C_1$};
\node[vertex, label=below:$3$] (3) at (2,0) {};
\node[vertex, label=above:$4$] (4) at (2,1.5) {$C_2$};
\node[vertex, label=right:$5$] (5) at (3.8,1.5) {};

\draw (1)--(2);
\draw (2)--(3);
\draw (3)--(4);
\draw (4)--(1);
\draw (2)--(4);
\draw (4)--(5);

\end{scope}

\begin{scope}[shift={(0,-4)}]
\node[vertex, label=above:$1$] (1) at (0,1.5) {};
\node[vertex, label=below:$2$] (2) at (0,0) {$C_1$};
\node[vertex, label=below:$3$] (3) at (2,0) {$C_3$};
\node[vertex, label=above:$4$] (4) at (2,1.5) {$C_2$};
\node[vertex, label=right:$5$] (5) at (3.8,1.5) {};

\draw (1)--(2);
\draw (2)--(3);
\draw (3)--(4);
\draw (4)--(1);
\draw (2)--(4);
\draw (4)--(5);

\end{scope}

\begin{scope}[shift={(5.5,-4)}]
\node[vertex, label=above:$1$] (1) at (0,1.5) {$C_4$};
\node[vertex, label=below:$2$] (2) at (0,0) {$C_1$};
\node[vertex, label=below:$3$] (3) at (2,0) {$C_3$};
\node[vertex, label=above:$4$] (4) at (2,1.5) {$C_2$};
\node[vertex, label=right:$5$] (5) at (3.8,1.5) {};

\draw (1)--(2);
\draw (2)--(3);
\draw (3)--(4);
\draw (4)--(1);
\draw (2)--(4);
\draw (4)--(5);

\end{scope}

\begin{scope}[shift={(11,-4)}]
\node[vertex, label=above:$1$] (1) at (0,1.5) {$C_4$};
\node[vertex, label=below:$2$] (2) at (0,0) {$C_1$};
\node[vertex, label=below:$3$] (3) at (2,0) {$C_3$};
\node[vertex, label=above:$4$] (4) at (2,1.5) {$C_2$};
\node[vertex, label=right:$5$] (5) at (3.8,1.5) {$C_5$};

\draw (1)--(2);
\draw (2)--(3);
\draw (3)--(4);
\draw (4)--(1);
\draw (2)--(4);
\draw (4)--(5);

\end{scope}
\end{tikzpicture}
    \caption{A graph together with how cars park with preference string $(2,2,3,1,1)$}
    \label{fig:graphExample}
\end{figure}

We now consider an example.
\begin{example}
    Say $G$ is the graph 
    shown in Figure~\ref{fig:graphExample}.   Let's consider the preference sequence $(2,2,3,1,1)$. The first car is placed on vertex 2. Since it is empty, it parks there. The next car is placed at vertex $2$. Since it is occupied, it looks to see if there are any larger neighbors that are empty.  Vertices 3 and 4 are both larger and empty. We pick the largest, which is 4.  So car $2$ parks in spot $4$. Next the third car is placed at vertex 3. It is empty so it parks there. Now the fourth car is placed at vertex 1. It is empty so it parks there. Finally the fifth car is placed at vertex 1. It is full. So it looks for neighbors that are empty. There are none. So it goes to the smallest neighbor larger than vertex 1. This is vertex 2. So it moves to vertex 2. It is full, so it looks for empty larger neighbors. There are none. So, it moves to the smallest vertex larger than 2, this is vertex 3.  Vertex 3 is full. All of its larger neighbors are full, so it moves to the smallest neighbor that is larger than 3. That is vertex 4. It is full so it looks for a larger empty neighbor. There is one (and only one), so it parks at vertex 5.
\end{example}

We now collect some families of largest neighbor parking functions.
    \begin{itemize}
        \item Let $P_n$ be the path graph on $[n]$ with edge set $i(i+1)$ for $1\leq i\leq n-1$. Then the largest neighbor parking functions on $P_n$  are the classical parking functions and hence there are $(n+1)^{n-1}$ such sequences.
        \item Let $K_n$ be the complete graph on $[n]$. Then the largest neighbor parking functions  on $K_n$ are the frustrated parking functions and hence there are $(2n-1)!!$ such sequences.
        \item Let $E_n$ be the empty graph on $[n]$ (i.e.~the graph with no edges). Then the largest neighbor parking functions on $E_n$ are  the permutations of $[n]$ and hence there are $n!$ such sequences.
        \item Let $ST_n$ be the star graph with central vertex $1$ (i.e.~the graph with edges $1i$ for $2\leq i\leq n$). There are $\dfrac{(n+1)!}{2}$ largest neighbor parking functions on $ST_n$.
        \item Let $\overline{ST}_n$ be the star on $[n]$ with central vertex $n$ (i.e.~the graph with edges $in$ for $1\leq i\leq n-1$).  There are $\dfrac{(n+1)!}{2}$ largest neighbor parking functions on $\overline{ST}_n$.
    \end{itemize}
We wish to point out that while $ST_n$ and $\overline{ST}_n$ are isomorphic and have the same number of largest neighbor parking functions, the sets of these parking functions are different. For example $\underbrace{(1,1,\ldots,1)}_{n\text{ }}$ is valid for $ST_n$, but not valid for $\overline{ST}_n$ when $n\geq 3$. At first glance, this might suggest that isomorphic graphs have the same number of largest neighbor parking functions. However, $P_3$ is isomorphic to both $ST_3$ and $\overline{ST}_3$, but has a different number of largest neighbor parking functions.  This is all to say that the labeling absolutely matters when considering largest neighbor parking functions.\\

We now provide a few basic facts about largest neighbor parking functions.

\begin{proposition}\label{prop:largestNeighborImpliesClassical}
    Let $G$ be a graph on $[n]$. Every  largest neighbor parking function of $G$ is a (classical) parking function  of length $n$.  
\end{proposition}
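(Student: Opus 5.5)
The plan is to mirror the proof of Proposition~\ref{prop:fpfsArePFs}, arguing by the contrapositive and using the characterization of Konheim and Weiss: $(a_1,a_2,\dots,a_n)$ is a parking function if and only if, for every $i$, the number of cars with preference at least $i$ is at most $n-i+1$. So suppose $(a_1,a_2,\dots,a_n)$ is not a parking function. Then there is some $i$ such that more than $n-i+1$ cars prefer a vertex in $\{i,i+1,\dots,n\}$. The aim is to show that these cars cannot all park on $G$, so the sequence is not a largest neighbor parking function of $G$.

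The key step is a monotonicity property of the largest neighbor rule: a car never parks at, or even visits, a vertex whose label is smaller than its preferred vertex. This follows directly from the description of the procedure, and I would state it as a short observation, proved by following a car's route. The car starts at $a_j$. At any current vertex $v$, it either parks at $v$, or parks at an empty neighbor of $v$ with label larger than $v$, or moves to the smallest neighbor of $v$ with label larger than $v$. In every case, the labels of the vertices the car occupies form a strictly increasing sequence starting at $a_j$. Therefore, if the car parks at all, it parks at a vertex with label at least $a_j$.

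With this observation, every car with preference at least $i$ that parks must park in $\{i,\dots,n\}$. That set has only $n-i+1$ vertices, and each vertex holds at most one car. More than $n-i+1$ such cars exist, so by pigeonhole at least one of them fails to park, and the sequence is not a largest neighbor parking function of $G$.

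I do not expect a genuine obstacle. The only point needing care is the monotonicity observation, and specifically making explicit that the rule only ever moves a car to strictly larger labels. Once that is stated, the counting argument is identical to the one in Proposition~\ref{prop:fpfsArePFs}.
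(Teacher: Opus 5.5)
Your proposal is correct and follows essentially the same argument as the paper: the contrapositive via the Konheim--Weiss characterization, then pigeonhole on the vertices $\{i,\dots,n\}$. Your explicit monotonicity observation, that a car only ever visits strictly larger labels, just spells out the step the paper states without justification ("all these cars will need to park at a vertex $i$ or greater").
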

\begin{proof}
    We prove the contrapositive. Suppose that $(a_1,a_2,\dots, a_n)$ is not a parking function. Then there is some $i$ such that there are more than $n-i+1$ cars with a preference at least $i$. But then all these cars will need to park at a vertex $i$ or greater, which is impossible.
\end{proof}

Given two words $(b_1,b_2,\dots, b_m)$ and $c=(c_1,c_2,\dots, c_n)$ a \textit{shuffle} of  $(b_1,b_2,\dots, b_m)$ and $c=(c_1,c_2,\dots, c_n)$ is a word  of length $m+n$ such that both $(b_1,b_2,\dots, b_m)$ and $(c_1,c_2,\dots, c_n)$ are substrings of the shuffle. For example, the shuffles of $(1,1)$ and $(2,4,3)$ are
\begin{align*}
   (1,1,2,4,3),\quad
(1,2,1,4,3),\quad
&(1,2,4,1,3),\quad
(1,2,4,3,1),\quad
(2,1,1,4,3),\quad
(2,1,4,1,3),\quad
(2,1,4,3,1),\\
&(2,4,1,1,3),\quad
(2,4,1,3,1),\quad
(2,4,3,1,1) 
\end{align*}

 Now suppose we have a graph $G$ on $[n]$ with  connected components $C_1,C_2,\dots, C_k$.  Since  the $C_i$'s are disjoint, if we have a string $(a_1,a_2,\dots,a_n)$ over $[n]$ we can identify which entries belong to  each $C_i$. Thus, we can extract substrings consisting of only elements of $C_i$. If $(a_1,a_2,\dots,a_n)$ is a largest neighbor parking function for $G$, then the substring for $C_i$ must be a largest neighbor parking function for $C_i$.  Moreover, if we take a largest neighbor parking function for each $C_i$ and shuffle them together, we get a largest neighbor parking function on $G$.    Thus, we have the following proposition. Note that this proposition implies  when considering largest neighbor parking functions, we may assume the graph is connected.

\begin{proposition}\label{prop:neighborShuffle}
    Let $G$ be a graph on $[n]$ with connected components $C_1, C_2,\dots, C_k$. A  sequence   is a largest neighbor parking function of $G$ if and only if it is a shuffle of largest neighbor parking functions of $C_1,C_2,\dots, C_k$.
\end{proposition}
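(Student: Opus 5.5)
The key observation is locality: a car never leaves the connected component of its preferred vertex. When a car starts at a vertex $v$ it either parks at $v$ or moves along an edge to a neighbor of $v$, so by induction on the number of moves every vertex it visits, including any vertex where it parks, lies in the component $C_j$ containing $v$. Moreover, each decision the car makes depends only on which vertices of $C_j$ are occupied: whether $v$ is empty, which larger neighbors of $v$ are empty, and which is the smallest larger neighbor. I will use this as a lemma and then prove the two directions of Proposition~\ref{prop:neighborShuffle} by induction on the number of cars that have entered.

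Given a string $(a_1,\dots,a_n)$ over $[n]$, for each $j$ let $w^{(j)}$ be the subsequence of entries lying in $C_j$. Every string is a shuffle of its subsequences $w^{(1)},\dots,w^{(k)}$. Conversely, any shuffle of words $w^{(j)}$ with $w^{(j)}$ over $C_j$ has exactly these $w^{(j)}$ as its component subsequences. So it suffices to prove that $(a_1,\dots,a_n)$ is a largest neighbor parking function of $G$ if and only if each $w^{(j)}$ is a largest neighbor parking function of $C_j$, where $C_j$ is viewed as a graph on its own vertex set with the labels inherited from $G$. For this to make sense, $w^{(j)}$ must have length $|C_j|$. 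Note that with the induced labels, $C_j$ need not be on $[|C_j|]$; I will take the paper's notion of a largest neighbor parking function of $C_j$ to mean one with respect to its inherited labels.

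The central step is a coupling claim: at every stage $t$, the set of occupied vertices of $C_j$ in the parking process on $G$ with input $(a_1,\dots,a_t)$ equals the set of occupied vertices obtained by running the process on $C_j$ alone with the prefix of $w^{(j)}$ read so far. I prove this by induction on $t$. If $a_{t+1}\in C_j$, the lemma shows the car's trajectory and outcome in $G$ coincide with those in $C_j$, because the relevant neighborhoods and occupancies agree. In particular, the car fails in $G$ exactly when it fails in $C_j$. The occupancy of every other component is unchanged.

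For the forward direction, suppose all cars park in $G$. Since cars starting in $C_j$ park inside $C_j$, and all $n$ vertices get filled, exactly $|C_j|$ cars have preferences in $C_j$. Hence $w^{(j)}$ has length $|C_j|$, and by the coupling claim all of its cars park in $C_j$. For the reverse direction, suppose each $w^{(j)}$ is a largest neighbor parking function of $C_j$. The lengths then add up to $n$, and by the coupling claim no car in the interleaved process ever fails.

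I expect the main obstacle to be bookkeeping rather than mathematics. Specifically, one has to set up the notion of a parking function on a component with non-consecutive labels and state the locality lemma carefully, so that the move to the smallest larger neighbor is visibly determined within $C_j$. Once that is in place, the induction is routine.
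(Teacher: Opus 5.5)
Your proposal is correct and follows the paper's route: split the string into its subsequences on each component and argue both directions, which the paper only asserts in a short paragraph before the proposition. Your locality lemma, your coupling induction, and your counting argument that each component subsequence has length $|C_j|$ supply the justification that the paper's sketch leaves implicit.
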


We finish this section by considering weakly decreasing largest neighbor parking functions.

\begin{proposition}\label{prop:neighborWeaklyDec}
    Let $G$ be a graph on $[n]$ that contains $P_n$ (i.e.~the path graph  with edge set $i(i+1)$ for $1\leq i\leq n-1$).  Then every weakly decreasing parking function is a largest neighbor parking function on $G$. Consequently the number of weakly decreasing largest neighbor parking functions on $G$ is the $n^{th}$ Catalan number.
\end{proposition}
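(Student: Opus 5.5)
I would mimic the proof of Proposition~\ref{prop:weaklyDecPFAreFPFs}, arguing by contradiction. Let $(a_1,a_2,\dots,a_n)$ be a weakly decreasing parking function and let car $j$ be the first car that cannot park on $G$. Suppose $a_j=i$. The aim is to show that every vertex $i,i+1,\dots,n$ is occupied when car $j$ arrives. This gives the contradiction: since the sequence is weakly decreasing, the cars $1,\dots,j-1$ all have preference at least $i$, so together with car $j$ there are at least $n-i+2$ cars preferring a vertex in $\{i,\dots,n\}$, which is more than that set can hold. This contradicts $(a_1,\dots,a_n)$ being a parking function.

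The key step is to show that a car starting at $i$ can fail to park only if all vertices $\geq i$ are full. I would follow the car's route under the largest neighbor rule. Starting from a vertex $v$, it parks at $v$ if $v$ is empty. Otherwise it checks the larger neighbors of $v$, and it parks if any of them is empty. Since $G$ contains $P_n$, the vertex $v+1$ is always a larger neighbor of $v$ whenever $v<n$. So the car fails only if $v$ and all larger neighbors of $v$ are full, in which case it moves to the smallest larger neighbor, namely $v+1$. By induction, starting from $i$, the car visits $i,i+1,\dots,n$ in order and fails only after finding every one of these vertices occupied, as well as all their larger neighbors. In particular, vertices $i,\dots,n$ are all full, which is exactly what the counting contradiction above needs.

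For the count, the argument shows that every weakly decreasing parking function is a largest neighbor parking function on $G$. Conversely, Proposition~\ref{prop:largestNeighborImpliesClassical} shows that every weakly decreasing largest neighbor parking function is a classical parking function. So the two sets coincide. Weakly decreasing parking functions correspond to nondecreasing parking functions by reversal, and these are counted by the $n^{th}$ Catalan number, as already used in Proposition~\ref{prop:weaklyDecPFAreFPFs}.

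The main obstacle is the route claim in the second paragraph: one must verify that the smallest larger neighbor is exactly $v+1$. This holds because $v(v+1)$ is an edge of $G$ and no integer lies strictly between $v$ and $v+1$. Once that is checked, the car sweeps through every vertex $\geq i$, and the rest is the same counting as in the complete-graph case.
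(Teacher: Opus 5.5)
Your proof is correct and takes essentially the same approach as the paper. The paper argues directly: $a_i\le n+1-i$ guarantees an empty vertex $j> a_i$ whenever car $i$'s preferred vertex is occupied, and the $P_n$ edges carry the car up to $j$. You run the same count by contradiction on the first car that fails to park, your explicit check that the smallest larger neighbor of $v$ is $v+1$ spells out the routing step the paper leaves implicit, and your Catalan count via Proposition~\ref{prop:largestNeighborImpliesClassical} matches the paper's.
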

\begin{proof}
By Proposition~\ref{prop:largestNeighborImpliesClassical}, it is enough to show that every weakly decreasing parking function can park on $G$. Let $(a_1,a_2,\dots, a_n)$ be a parking functions such that $a_1\geq a_2\geq\cdots\geq a_n$. Then for all $i$, $a_i\leq n+1-i$. Thus, when the $i^{th}$ car is placed on its preferred vertex, either the vertex is unoccupied or there is at least one vertex $j$ with $a_i<j$ that is unoccupied. Since $G$ contains $P_n$, either the car will park before it gets to vertex $j$ or it will eventually get to vertex $j$. Since $j$ is unoccupied, it will park there. In any case, the car can park. 
\end{proof}

We should note that you cannot drop the assumption of the graph containing $P_n$ from the previous proposition. For example,  let $G$ be the  the graph on $[3]$ with edges $13$ and $23$ (i.e.~$G=\overline{ST}_3$), the sequence $(1,1,1)$ is a weakly decreasing parking function, but cannot park on $G$.\\

We hope this new way of thinking about parking functions in terms of parking on graphs inspires the reader to explore their favorite families of graphs and find the largest neighbor parking functions for those graphs.\\

\section{Open Questions}
In Corollary~\ref{cor:numWithKDescentsIsSecondEuler}, we saw that the number of frustrated parking functions of length $n$ with $k$ lucky cars/spots is the second order Eulerian number $\displaystyle\SecondEulerian{n}{k}$. This was shown using height labeled Dyck paths. However, a better known interpretation for the   second order Eulerian number is that it counts descents in \textit{Stirling permutations}. These are words of length $2n$ such that each character in $[n]$ appears exactly twice and such that for all $i\in [n]$, any character that appears between the two values of $i$ is larger than $i$.   
\begin{question}
    Can one find a nice bijection that takes frustrated parking functions with $k$ lucky cars/spots to Stirling permutations with $k$ descents?
\end{question}

As is the case with classical parking functions, we have the typical questions concerning ascents, descents, and plateaus.

\begin{question}
    What does the distribution of descent/ascents/plateaus in frustrated parking functions look like?  
\end{question}

By Proposition~\ref{prop:neighborShuffle}, if we take the disjoint union of $G$ and $H$, the largest neighbor parking functions of $G\cup H$ arise from shuffling the largest neighbor parking functions of $G$ and $H$. However, there are many other ways to combine graphs such as gluing along subgraphs or taking the product of the graphs. This leads to the following question.

\begin{question}
  How do different ways of combining graphs affect their largest neighbor parking functions?
\end{question}

In Proposition~\ref{prop:neighborWeaklyDec}, we showed that as long as $G$ contains the path graph, the weakly decreasing largest neighbor parking functions are the same as the weakly decreasing classical parking functions. But what about other classes of parking functions? For example, we could count them based on if they are weakly increasing, or how many lucky spots/cars they have. 
\begin{question}
    Given a graph $G$, what do the counts of subfamilies of largest neighbor parking functions look like?
\end{question}
Another natural subfamily one could look at are those that park either at their preferred vertex or one that is adjacent to it. This is a generalization of unit interval parking functions in the classical sense. Note that in this setup, for the complete graph $K_n$, every largest neighbor parking function is a unit interval parking function. With this framework, we see that  frustrated parking functions are exactly the unit interval parking functions for the complete graph.\\

\section*{Acknowledgments}
This research was initiated through the Summer Undergraduate Research Program (SURP) at Loyola Marymount University, whose support made this work possible. Some of the results in this article  appeared in the undergraduate theses of the second and third authors. The second author  was also supported by Pi Mu Epsilon to present this work at the Joint Mathematics Meeting 2026. None of the original mathematics in this paper was created using AI. We did make use of   AI  to help make some of the TikZ diagrams that appear in this article as well as for general (non-mathematical) proofreading.

 \bibliographystyle{alpha}
\bibliography{ref}

\newcommand{\etalchar}[1]{$^{#1}$}
\begin{thebibliography}{CHMM{\etalchar{+}}24}

\bibitem[B\'09]{bonaStirlingPerms}
Mikl\'os B\'ona.
\newblock Real zeros and normal distribution for statistics on {S}tirling permutations defined by {G}essel and {S}tanley.
\newblock {\em SIAM J. Discrete Math.}, 23(1):401--406, 2008/09.

\bibitem[Bau19]{Baumgardner2019}
Alyson Baumgardner.
\newblock The naples parking function.
\newblock Honors Contract--Graph Theory, Florida Gulf Coast University, 2019.

\bibitem[BEH{\etalchar{+}}24]{unitIntRFubini}
S.~Alex Bradt, Jennifer Elder, Pamela~E. Harris, Gordon~Rojas Kirby, Eva Reutercrona, Yuxuan Wang, and Juliet Whidden.
\newblock Unit interval parking functions and the {$r$}-{F}ubini numbers.
\newblock {\em Matematica}, 3(1):370--384, 2024.

\bibitem[Cal09]{callan2009combinatorialsurveyidentitiesdouble}
David Callan.
\newblock A combinatorial survey of identities for the double factorial.
\newblock {\em https://arxiv.org/abs/0906.1317}, 2009.

\bibitem[CCH{\etalchar{+}}21]{chooseOwnAdv}
Joshua Carlson, Alex Christensen, Pamela~E. Harris, Zakiya Jones, and Andr\'es Ramos~Rodr\'iguez.
\newblock Parking functions: choose your own adventure.
\newblock {\em College Math. J.}, 52(4):254--264, 2021.

\bibitem[CDMY21]{intervalPFs}
Emma Colaric, Ryan DeMuse, Jeremy~L. Martin, and Mei Yin.
\newblock Interval parking functions.
\newblock {\em Adv. in Appl. Math.}, 123:Paper No. 102129, 17, 2021.

\bibitem[CEH{\etalchar{+}}25]{invInParkingFunc}
Kyle Celano, Jennifer Elder, Kimberly~P. Hadaway, Pamela~E. Harris, Amanda Priestley, and Gabe Udell.
\newblock Inversions in parking functions.
\newblock {\em https://arxiv.org/abs/2508.11587}, 2025.

\bibitem[CHJ{\etalchar{+}}20]{GenOfParkingFuncAllowBack}
Alex Christensen, Pamela~E. Harris, Zakiya Jones, Marissa Loving, Andr\'es Ramos~Rodr\'iguez, Joseph Rennie, and Gordon~Rojas Kirby.
\newblock A generalization of parking functions allowing backward movement.
\newblock {\em Electron. J. Combin.}, 27(1):Paper No. 1.33, 18, 2020.

\bibitem[CHMM{\etalchar{+}}24]{permutationInvariantParkingFunctions}
Douglas~M. Chen, Pamela~E. Harris, J.~Carlos Mart\'inez~Mori, Eric~J. Pab\'on-Cancel, and Gabriel Sargent.
\newblock Permutation invariant parking assortments.
\newblock {\em Enumer. Comb. Appl.}, 4(1):Paper No. S2R4, 25, 2024.

\bibitem[CMHJ{\etalchar{+}}25]{unitIntPFs}
Lucas Chaves~Meyles, Pamela~E. Harris, Richter Jordaan, Gordon Rojas~Kirby, Sam Sehayek, and Ethan Spingarn.
\newblock Unit-interval parking functions and the permutohedron.
\newblock {\em J. Comb.}, 16(3):281--301, 2025.

\bibitem[Had21]{Hadaway2021}
Kimberly~P. Hadaway.
\newblock On combinatorial problems of generalized parking functions.
\newblock BA Thesis, Williams College, 2021.

\bibitem[KW66]{konheim1966occupancy}
Alan~G Konheim and Benjamin Weiss.
\newblock An occupancy discipline and applications.
\newblock {\em SIAM Journal on Applied Mathematics}, 14(6):1266--1274, 1966.

\bibitem[MM24]{mori}
J.~Carlos Mart\'inez~Mori.
\newblock What is{$\ldots$} a parking function?
\newblock {\em Notices Amer. Math. Soc.}, 71(8):1062--1065, 2024.

\bibitem[{OEI}26]{oeisA002538}
{OEIS Foundation Inc.}
\newblock The {O}n-{L}ine {E}ncyclopedia of {I}nteger {S}equences.
\newblock \url{https://oeis.org/A002538}, 2026.
\newblock Entry A002538: Second-order Eulerian numbers.

\bibitem[Yan15]{yan}
Catherine~H. Yan.
\newblock Parking functions.
\newblock In {\em Handbook of enumerative combinatorics}, Discrete Math. Appl. (Boca Raton), pages 835--893. CRC Press, Boca Raton, FL, 2015.

\end{thebibliography}

 \end{document}